\newtheorem{theorem}{Theorem}
\newtheorem{proposition}[theorem]{Proposition}
\newtheorem{lemma}[theorem]{Lemma}
\theoremstyle{definition}
\newtheorem{remark}[theorem]{Remark}
\numberwithin{equation}{section}
\numberwithin{figure}{section}
\numberwithin{theorem}{section}
\newcommand{\N}{\mathbb{N}}
\newcommand{\R}{\mathbb{R}}
\newcommand{\abs}[1]{\left| #1 \right|}
\renewcommand{\tilde}{\widetilde}
\newcommand{\eps}{\varepsilon}
\newcommand{\pa}{\partial}
\definecolor{refkey}{gray}{0.75}
\colorlet{labelkey}{blue} 
\begin{document}

\title[Two-species kinetic model of chemotaxis]
{Existence and diffusive limit of a two-species kinetic model of chemotaxis}

\begin{abstract}
In this paper, we propose a kinetic model describing the collective motion
by chemotaxis of two species in interaction emitting the same chemoattractant.
Such model can be seen as a generalisation to several species 
of the Othmer-Dunbar-Alt model which takes into account the run-and-tumble 
process of bacteria.
Existence of weak solutions for this two-species kinetic model is studied and 
the convergence of its diffusive limit towards a macroscopic model of 
Keller-Segel type is analysed.

\end{abstract}

\author[L. Almeida]{Luis Almeida}
\address{
CNRS, UMR 7598, Laboratoire Jacques-Louis Lions, F-75005, Paris, France  \\
Sorbonne Universit\'es, UPMC Univ Paris 06, UMR 7598, Laboratoire Jacques-Louis Lions, F-75005, Paris, France \\
INRIA-Paris-Rocquencourt, EPC MAMBA, Domaine de Voluceau, BP105, 78153 Le Chesnay Cedex}
\email{luis@ann.jussieu.fr}

\author[C. Emako-Kazianou]{Casimir Emako-Kazianou}
\address{Sorbonne Universit\'es, UPMC Univ Paris 06, UMR 7598, Laboratoire Jacques-Louis Lions, F-75005, Paris, France\\
CNRS, UMR 7598, Laboratoire Jacques-Louis Lions, F-75005, Paris, France \\
INRIA-Paris-Rocquencourt, EPC MAMBA, Domaine de Voluceau, BP105, 78153 Le Chesnay Cedex}
\email{emako@ann.jussieu.fr}

\author[N. Vauchelet]{Nicolas Vauchelet}
\address{Sorbonne Universit\'es, UPMC Univ Paris 06, UMR 7598, Laboratoire Jacques-Louis Lions, F-75005, Paris, France \\
CNRS, UMR 7598, Laboratoire Jacques-Louis Lions, F-75005, Paris, France \\
INRIA-Paris-Rocquencourt, EPC MAMBA, Domaine de Voluceau, BP105, 78153 Le Chesnay Cedex}
\email{vauchelet@ann.jussieu.fr}

\keywords{chemotaxis, kinetic model, drift-diffusion limit, two-species Keller-Segel model}
\subjclass[2010]{35K55, 45K05, 82C70, 92C17}
\date{\today}

\maketitle

\section{Introduction}
Chemotaxis is the biological mechanism by which organisms sense their 
environment and react to chemical stimuli. 
It induces a motion towards the attractant (positive chemotaxis) or away from 
the repellent (negative chemotaxis).
One consequence of positive chemotaxis is the formation of patterns 
and aggregates as observed in \cite{BudBerg,Park,Hofer,KSegel} 
for motile bacteria Escherichia coli or Dictyostelium discoideum mold.
Many mathematical models have been proposed to explain this aggregation
phenomena. Among them, we can distinguish between microscopic and 
macroscopic models depending on the level of description.

In \cite{OthmerAlt}, Othmer, Dunbar and Alt choose the microscopic setting in which cells are represented by their velocity distribution $f(x,v,t)$ and the chemical attractant (chemoattractant) by its concentration $S(x,t)$.
The dynamics of $f$ is given by
\begin{equation}\label{ODA_kinetic}
 \partial_t f+v\cdot\nabla_x f=\int_{V}\big(T[S](x,v,v',t)f(x,v',t)-T[S](x,v',v,t)f(x,v,t)\big)\,dv',
\end{equation}
with $V$ a bounded domain of $\mathbb{R}^d$.
This kinetic equation points out the {\it run-and-tumble}
process which characterises the individual motion of cells (see \cite{Berg}).
The left-hand side is associated to the run phase during which cells move 
in a straight line at a constant speed $v$. 
The right-hand side includes loss and gain terms resulting from the 
reorientation phase (tumble).
Cells reorient from $v$ to $v'$ with the probability per unit of time $T[S](x,v',v,t)/\int_V T[S](x,v',v,t)\,dv'$.
This justifies why $T[S]$ is called the tumbling rate or kernel. 
Many choices of $T[S]$ are possible.
Since cells are able to respond to temporal changes of the gradient 
of the chemical substance $S$ along their pathways, we opt for the form of $T[S]$ proposed in \cite{DolakSch}~:
\begin{equation}\label{T[S]_def}
  \forall x\in \mathbb{R}^d,\,v,v'\in V,\, t>0, \quad  T[S](x,v,v',t):=\phi(\partial_t S+v'\cdot \nabla_x S),
\end{equation}
where $\phi$ is decreasing in order to take into account the preference 
for favourable regions. This model has shown to be efficient to 
describe the traveling pulse behaviour of bacteria observed 
experimentally in \cite{Saragosti}.
Finally, the chemoattractant is emitted by the cells themselves, diffuses 
into the medium and is naturally degraded. Then, the chemoattracttant
concentration $S$ appearing in \eqref{ODA_kinetic} solves the 
following reaction-diffusion equation~:
\begin{equation}\label{eq_S}
 \delta \partial_t S-\Delta S +S=\int_{V}f(x,v,t)\,dv:=\rho(x,t),\quad \delta=0,1.
\end{equation}


At a macroscopic level, the dynamics of cells is described by their
density $\rho(t,x)$.
The well-known Keller-Segel \cite{Keller} system has been widely used to
describe aggregation by chemotaxis. This model describes the dynamics
of cells thanks to a parabolic equation with an oriented drift 
depending on the spatial gradient of the chemoattractant~:
\begin{equation}\label{Keller_Segel}
\left\{
\begin{aligned}
 &\partial_t \rho=\nabla\cdot\left(D\nabla \rho-\chi \rho \nabla S \right),\\
 &\delta \partial_t S-\Delta S +S =\rho,\quad \delta=0,1,
\end{aligned}
\right.
\end{equation}
where $D$ and $\chi$ are positive constants called the diffusivity and 
the chemosensivity of the species to the chemoattractant.

In the mathematical litterature both elliptic ($\delta =0$) and 
parabolic ($\delta =1$) cases are encountered.
Although the point of view of microscopic and macroscopic models is different,
it has been proved that the Keller-Segel model \eqref{Keller_Segel} 
can be derived as the diffusion limit of the Othmer-Dunbar-Alt model
\eqref{ODA_kinetic}--\eqref{eq_S} (see \cite{Bellomo,Bellouquid,CMPS,HKS,VCJS,BP}).
The hyperbolic limit can also be considered \cite{DolakSch,FilbetP,jamesnv}
leading to the same kind of macroscopic model with small diffusion.
As a consequence, coefficients $D$ and $\chi$ of \eqref{Keller_Segel} 
depend on microscopic parameters which can be measured. 
This allows one to fit the model with experimental data as done in \cite{VCJS}. 
Other advantages of \eqref{Keller_Segel} are understanding of 
collective effects emerging from individual behaviours and its simple simulation 
compared to \eqref{ODA_kinetic}--\eqref{eq_S}. 
However, the microscopic approach provides a general framework of 
chemotaxis models which encompasses macroscopic models including hyperbolic models obtained by 
a momentum method from kinetic models (see e.g. \cite{Greenberg,Hillen,HillenStevens,Ribot}).


In this work, we focus on the modelling of the chemotactic behaviour
of two-interacting species.
Existing two-species models (see e.g. \cite{Horstmann,Wolansky,Fasano,Marco}) 
concern the macroscopic scale. For instance, the following Keller-Segel 
two-species model is considered~:
\begin{equation}\label{Keller_Segel_2}
 \left\{
 \begin{aligned}
  &\partial_t \rho_1=\nabla \cdot\big(D_1\nabla \rho_1-\chi_1\rho_1 \nabla S\big),\\
  &\partial_t \rho_2=\nabla \cdot\big(D_2\nabla \rho_2-\chi_2 \rho_2\nabla S\big),\\
  &\delta \partial_t S-\Delta S+S=\rho_1+\rho_2,\quad \delta=0,1,
 \end{aligned}
\right.
\end{equation}
where $D_1,D_2$ and $\chi_1,\chi_2$ are the diffusivities and chemosensivities of the two species 1, 2 to the common chemoattractant $S$. Which can happen in case we consider two closely related types of cells.
Many theoretical issues arise from \eqref{Keller_Segel_2}. The question of global existence of solutions and understanding of the blow-up are addressed in \cite{Conca,CEEVK,Espejo} in the two-dimensional case. 
These results are validated by numerical simulations carried out in \cite{Kurganov}. In addition, traveling wave solutions of a two-species model like \eqref{Keller_Segel_2} are studied in \cite{Lin}. 

In this paper, we address the question of the derivation of such macroscopic 
model from a kinetic point of view.
We propose the following  microscopic model in which the dynamics of the distribution function $f_i(x,v,t)$ for the $i$-th species,
$i=1,2$, is governed by the two following kinetic equations~: 
\begin{equation}\label{kinetics}
 \left\{
 \begin{aligned}
  & \partial_t f_i  +v\cdot \nabla_x f_i =\int_{V}\left(T_i[S](x,v,v',t)f_i(x,v',t)-T_i[S](x,v',v,t)f_i(x,v,t)\right)dv',\\
  & f_i(x,v,t=0)=f_i^{ini}(x,v),\quad \text{ for }i=1,2.\\
 \end{aligned}
 \right.
\end{equation}
The position $x \in \mathbb{R}^d$, velocity $v\in V$ (where $V$ is a bounded set of $\mathbb{R}^d$) and time $t\geq 0$.
As previously, the tumbling rate $T_i[S]$ takes into account temporal changes
of the chemoattractant concentration along the path of cells and reads~:
\begin{equation}\label{tumbling_kernel}
  \forall x\in \mathbb{R}^d,\, v,v'\in V,\, t>0,\quad T_i[S](x,v,v',t):=\phi_i(\partial_t S+v'\cdot \nabla_x S),\quad \text {for }i=1,2,
\end{equation}
where $\phi_i$ is a decreasing function.
We consider the case where species 1 and 2 involved in \eqref{kinetics}
emit the same attracting chemical substance $S$, whose dynamics is given 
by the parabolic ($\delta =1$) or elliptic ($\delta =0$) system~:
\begin{equation}\label{eq_S2}
\left\{
\begin{aligned}
  & \delta \partial_t S-\Delta S +S =\int_{V} f_1(x,v,t) dv +\int_{V} f_2(x,v,t) dv:=\rho_1(x,t)+\rho_2(x,t) , \quad \delta=0,1,\\
   & S(x,t=0)=0, \qquad \qquad \mbox{ if } \delta =1.
\end{aligned}
\right.
\end{equation}
We determine the drift-diffusion limit of \eqref{kinetics}--\eqref{eq_S2} by performing a diffusive scaling of space and time $\tilde{x}=\varepsilon x,\,\tilde{t}=\varepsilon^2 t$. After dropping the tilde, 
system \eqref{kinetics} now reads
\begin{equation}\label{kinetics_eps}
 \left\{
 \begin{aligned}
  & \varepsilon^2\partial_t f_i^{\varepsilon}  +\varepsilon v\cdot \nabla_x f_i^{\varepsilon} =-\mathcal{T}_i^{\varepsilon}[S^{\varepsilon}](f_i^{\varepsilon}),\\
 &f_i^{\varepsilon}(x,v,t=0)=f_i^{ini}(x,v),\quad \text{ for } \quad i=1,2,
 \end{aligned}
 \right.
\end{equation}
 with
  \begin{equation*}
  \mathcal{T}_i^{\varepsilon}[S](f):=\int_{V}\left(T_i^{\varepsilon}[S](x,v',v,t)f(x,v,t)-T_i^{\varepsilon}[S](x,v,v',t)f(x,v',t)\right)dv',
 \end{equation*}
where we consider as above
 \begin{equation}\label{def_Teps}
  \forall x\in \mathbb{R}^d,\, v,v'\in V,\, t>0,\quad 
T_i^{\varepsilon} [S](x,v,v',t):=\phi_i^{\varepsilon}(\varepsilon \partial_t S+v'\cdot \nabla_x S),\quad \text {for }i=1,2.\\      
\end{equation}
The difference of scale between terms $\partial_t S$ and $v\cdot \nabla_x S$ comes from the scaling between space and time.
The equation for $S^{\varepsilon}$ is unchanged and stated now as
\begin{equation}\label{eq_S_eps}
\left\{
\begin{aligned}
 & \delta \partial_t S^{\varepsilon}-\Delta S^{\varepsilon}+S^{\varepsilon}=\rho_1^{\varepsilon}+\rho_2^{\varepsilon}, \qquad \qquad  \delta = 0,1,\\
 & S^{\varepsilon}(x,t=0)=0, \qquad \qquad \mbox{if } \delta =1.
\end{aligned}
\right.
\end{equation}

We first prove the global-in-time existence of solution $(f_1,f_2,S)$  
of \eqref{kinetics}--\eqref{eq_S2}.
Then, we prove convergence when $\eps\to 0$ of solutions to 
\eqref{kinetics_eps}--\eqref{eq_S_eps} towards a macroscopic 
model of Keller-Segel type.
The proof relies on uniform estimates on 
$f_1^{\varepsilon},f_2^{\varepsilon},S^{\varepsilon}$
which allow us to use the Aubin-Lions-Simon compactness Lemma \cite{JS}.
We only focus on the case of bounded tumbling kernel $T$ for which
no blow-up of solutions in finite time is expected both at the microscopic
and macroscopic levels. 
This non blow-up has been proved in the one species case in \cite{Chertock}.

The paper is organised as follows. The following Section presents the two main results~: 
the global-in-time existence theorem of solutions to 
\eqref{kinetics_eps}--\eqref{eq_S_eps} for fixed $\eps>0$ 
and the convergence as $\eps\to 0$ of this solution towards a macroscopic model,
i.e. the drift-diffusion limit of \eqref{kinetics_eps}--\eqref{eq_S_eps}.
We also formally derive in this Section the equation verified by the limit. 
By a fixed-point argument, global existence of solutions to the kinetic model
is proved in Section~\ref{global_existence}.
Section \ref{drift_diffusion} is devoted to the proof 
of the drift-diffusion limit stated in Theorem~\ref{limit}.
Finally, we explain in the appendix the non blow-up of solutions of the
derived macroscopic model.

\section{Main results}\label{Main results}

Before stating our main results, we introduce some notations.
We denote $\Omega:=\mathbb{R}^d \times V$ where $V$ is a bounded 
and symmetric domain of $\R^d$, i.e. if $v\in V$ then $-v \in V$.
For $1\leq p,q\leq \infty$, $k \in \mathbb{N}^*$ and $\tau>0$, we define
\begin{itemize}
 \item $L^p_{+}(\mathbb{R}^d)$ the set of nonnegative functions in $L^p(\mathbb{R}^d)$.
 \item $W^{k,q}(\mathbb{R}^d)$ the space of functions $u$ such that for any 
$\gamma\in \N^d$ with $|\gamma|\leq k$, $D^{\gamma}u\in L^q(\mathbb{R}^d)$.
 \item $C^{0,\alpha}(\mathbb{R}^d)$, for $0<\alpha \leq 1$,  the space of 
H\"older continuous functions with exponent $\alpha$. It is equipped with the norm
 \begin{equation*}
  \left\|u\right\|_{C^{0,\alpha}(\mathbb{R}^d)}:=\left\|u\right\|_{L^{\infty}(\mathbb{R}^d)}+\sup_{x \neq y} \frac{\abs{u(x)-u(y)}}{\abs{x-y}^{\alpha}}.
 \end{equation*}
\item $C^{k,\alpha}(\mathbb{R}^d)$, for $0<\alpha\leq 1$, the space of functions whose derivatives up to the $k$-th order are H\"older continuous with exponent $\alpha$.
\item $L^p((0,\tau),B)$, for any Banach space $B$ on $\R^d$, the space 
of functions $u$ such that for a.e. $t\in (0,\tau)$, $u(\cdot,t)\in B$ and 
$t\mapsto \|u(\cdot,t)\|_B$ belongs to $L^p((0,\tau))$. It is 
endowed with the norm~:
\begin{equation*}
\|u\|_{L^p((0,\tau),B)}:=\left(\int_0^{\tau} \left\|u(\cdot,t)\right\|_B^pdt \right)^{1/p}.
\end{equation*}
\end{itemize}
Finally, we define the following abbreviations which are used throughout the paper~:
 \begin{align*}
 &f_i^{\varepsilon}:=f_i^{\varepsilon}(x,v,t),\quad f_i'^{\varepsilon}:=f_i^{\varepsilon}(x,v',t),\\
 &T_i^{\varepsilon}[S]:=T_i^{\varepsilon}[S](x,v',v,t),\quad T_i^{*,\varepsilon}[S]:=T_i^{\varepsilon}[S](x,v,v',t), \qquad \mbox{for } i=1,2.\\
 \end{align*}

\subsection{Main results}\label{theorems}

In this paper, we consider tumbling rates $T_i^{\varepsilon}[S]$ of the form \eqref{def_Teps} which meet the following requirement for $i=1,2$
\begin{enumerate}
\item[(H1)]
$ \phi_i^{\varepsilon}(z)=\psi_i\big(1+\varepsilon \theta_i(z)\big),$
with $\psi_i \in \R_+^\ast$ and $\theta_i \in C^{0,1}(\R) \cap L^\infty(\R)$ 
is nonincreasing and satisfies 
$ \left\|\theta_i\right\|_{L^{\infty}(\mathbb{R})}<1$.
\end{enumerate}

\begin{remark}
 This hypothesis is realistic since it was observed experimentally in \cite{Saragosti} that for bacteria \textit{E. Coli}, an external stimulus modifies their natural constant tumbling kernel by adding an anisotropic small term.  
\end{remark}

Note that the condition on the $L^{\infty}$-norm of $\theta_i$ ensures that $T_i^{\varepsilon}$ is positive at least for $\varepsilon$ smaller than $1$, 
which will always be the case here since we focus on the asymptotic limit $\varepsilon\to 0$. 
The positivity and the boundness of $T_i^{\varepsilon}[S]$ come from its physical meaning.

If $F$ denotes the uniform distribution on $V$
\begin{equation}\label{def_F}
 F(v):=\frac{\mathds{1}_{ v\in V}}{\abs{V}},
\end{equation}
with $\abs{V}$ the measure of the velocity set $V$.
Then, the symmetry assumption of $V$ implies that
\begin{equation}\label{momentum}
 \int_V F(v)dv=1 \quad \text{and} \quad  \int_V v F(v)dv=0.
\end{equation}

As in \cite{CMPS}, we define the symmetric and anti-symmetric parts of $\mathcal{T}_i^{\varepsilon}$ by
\begin{equation}\label{decomp_T}
 \begin{aligned}
\phi_i^{S,\varepsilon}[S]:&=\frac{T_i^{\varepsilon}[S]+T_i^{*,\varepsilon}[S]}{2}=\psi_i\left(1+\frac{\varepsilon}{2}\big(\theta_i(\varepsilon \partial_t S+v'\cdot \nabla_x S)+\theta_i(\varepsilon \partial_t S+v\cdot \nabla_x S)\big)\right),\\
\phi_i^{A,\varepsilon}[S]:&=\frac{T_i^{\varepsilon}[S]-T_i^{*,\varepsilon}[S]}{2}=\psi_i\frac{\varepsilon}{2}\big(\theta_i(\varepsilon \partial_t S+v'\cdot \nabla_x S)-\theta_i(\varepsilon \partial_t S+v\cdot \nabla_x S)\big).
\end{aligned}
\end{equation}

From Assumption (H1), $\phi_i^{S,\varepsilon}$ and $\phi_i^{A,\varepsilon}$ satisfy the following inequalities which are useful to derive uniform estimates in $\varepsilon$~: 
\begin{equation}\label{ineq_phi}
\begin{aligned}
\phi_i^{S,\varepsilon} & \geq \psi_i\big(1-\varepsilon \left\|\theta_i\right\|_{L^{\infty}(\mathbb{R})}\big),\\
\int_V \frac{(\phi_i^{A,\varepsilon})^2}{\phi_i^{S,\varepsilon}}dv'& \leq \varepsilon^2 \psi_i \abs{V} \left\|\theta_i\right\|_{L^{\infty}(\mathbb{R})}^2.
\end{aligned}
\end{equation}
The expansion of $\mathcal{T}_i^{\varepsilon}[S]$ now reads~:
 \begin{equation}\label{dev_T}
 \mathcal{T}_i^{\varepsilon}[S]=\mathcal{T}_i^{0}+\varepsilon \mathcal{T}_i^{1}[S],
\end{equation}
with 
\begin{equation}\label{def_T_ik}
\begin{aligned}
\mathcal{T}_i^{0}(f)&:=\psi_i\left(\abs{V} f-\rho\right),\\
\mathcal{T}_i^{1}[S](f)&:=\psi_i\big(\abs{V}\theta_i(\varepsilon \partial_t S+v\cdot \nabla_x S)f-\int_V\!\!\theta_i(\varepsilon \partial_t S+v'\cdot \nabla_x S) f'\,dv'\big).
\end{aligned}
\end{equation}
Our first result concerns the global-in-time existence of solution of \eqref{kinetics_eps}--\eqref{eq_S_eps}.
\begin{theorem} \label{existence}
Let $\varepsilon>0$ and assume that tumbling rates $T_1^{\varepsilon},T_2^{\varepsilon}$ are given by \eqref{def_Teps} where $\phi_1^{\varepsilon},\phi_2^{\varepsilon}$ are positive, bounded and Lipschitz continuous functions.

If the initial data $f^{ini}_1,f^{ini}_2$ are in $L^1_{+}(\Omega) \cap L^{\infty}(\Omega)$, then there exists a unique global solution of \eqref{kinetics_eps}--\eqref{eq_S_eps} such that
\begin{equation*}
\begin{aligned}
f_1^{\varepsilon},f_2^{\varepsilon} & \in L^{\infty}((0,\infty),L^1_{+} \cap \, L^{\infty}(\Omega)),\\
S^{\varepsilon} & \in L^{\infty}((0,\infty),L^p(\mathbb{R}^d)),\quad \text{for all }  1\leq p\leq \infty.
 \end{aligned}
\end{equation*}
\end{theorem}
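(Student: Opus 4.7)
The plan is to construct the solution for fixed $\varepsilon>0$ by a Banach fixed-point argument on a short time interval, and then to extend it globally using a priori bounds that do not blow up in finite time. The natural iteration variable is $S$: given a candidate $S$, solve the linear kinetic equations for $f_1,f_2$, form the source $\rho_1+\rho_2$, and let $\Phi(S)=\tilde S$ be the solution of the reaction-diffusion equation \eqref{eq_S_eps}.

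For the linear kinetic step, given $S$ regular enough that $\partial_t S$ and $\nabla_x S$ are bounded, the tumbling kernel $T_i^\varepsilon[S]$ is a bounded, nonnegative, measurable function, so the equation \eqref{kinetics_eps} is a linear transport equation with bounded gain and loss terms. Along the straight characteristics $x(s)=x-v(t-s)/\varepsilon$, Duhamel's formula together with a Picard iteration on the gain operator (whose operator norm on $L^\infty(\Omega)$ is controlled by $C/\varepsilon^{2}$) produces a unique mild solution $f_i\in L^\infty((0,\tau),L^1_+(\Omega)\cap L^\infty(\Omega))$. Two bounds are essentially free: mass is preserved species by species because $\int_V\mathcal{T}_i^\varepsilon\,dv=0$, and the $L^\infty$ bound grows at most like $\|f_i^{ini}\|_\infty e^{Ct/\varepsilon^2}$ by Gronwall applied to the mild formulation. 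These two estimates also yield $\rho_i[S]\in L^\infty((0,\tau),L^1\cap L^\infty(\R^d))$.

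For the fixed-point step, I would work on a ball
\[
\mathcal{B}_\tau=\bigl\{S:\ \|S\|_{L^\infty_tW^{1,\infty}_x}+\|\partial_tS\|_{L^\infty_{t,x}}\le R\bigr\}
\]
with $R$ depending only on the data. Standard heat-kernel / Bessel-potential estimates applied to \eqref{eq_S_eps} with source in $L^\infty_tL^1\cap L^\infty_x$ show that $\Phi$ sends $\mathcal{B}_\tau$ into itself once $\tau$ is small, in either the parabolic or elliptic case. For the contraction, I use the Lipschitz assumption on $\phi_i^\varepsilon$ to obtain the pointwise bound
\[
\bigl|T_i^\varepsilon[S_1]-T_i^\varepsilon[S_2]\bigr|\ \le\ C\bigl(\varepsilon|\partial_t(S_1-S_2)|+|\nabla_x(S_1-S_2)|\bigr),
\]
combine it with the already established $L^\infty$ bound on $f_i$, and propagate through Duhamel and Gronwall to get $\|f_i[S_1]-f_i[S_2]\|_{L^\infty_tL^1_x}\le C(\tau)\|S_1-S_2\|_{\mathcal{B}_\tau}$ with $C(\tau)\to0$ as $\tau\to0$. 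Linear regularity for the difference $\tilde S_1-\tilde S_2$ then closes the contraction.

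Global existence follows by iterating the local step: the $L^1$ norm is conserved, the $L^\infty$ norm grows only exponentially, so the constants needed to restart the argument are controlled on any finite interval. I expect the main obstacle to be the calibration of the functional space in Step 2: it must be strong enough that $\partial_tS$ and $\nabla_xS$ plug into the Lipschitz estimate on $\phi_i^\varepsilon$, yet weak enough that the regularisation provided by \eqref{eq_S_eps} (only one time derivative in the parabolic case, or zero in the elliptic case) is sufficient to close the loop; tracking the $\varepsilon$-dependence of all constants will also require care, though $\varepsilon>0$ is fixed here so it does not affect the conclusion.
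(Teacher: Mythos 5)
Your overall architecture (local fixed point, then continuation via the $L^1$ conservation and the exponential $L^\infty$ bound) matches the paper's, but you iterate on $S$ where the paper eliminates $S$ via the Bessel/heat-kernel representation and iterates directly on $f=(f_1,f_2)$ in $L^1((0,\tau),L^\infty(\Omega))^2$. The difference would be harmless if your Step 2 closed, but it does not as written, and the missing ingredient is exactly the paper's key lemma. Your ball $\mathcal{B}_\tau$ requires $\|\partial_t S\|_{L^\infty_{t,x}}\le R$, and you assert that standard heat-kernel / Bessel-potential estimates with source in $L^\infty_t(L^1\cap L^\infty)$ show $\Phi$ maps $\mathcal{B}_\tau$ into itself. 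This is false at the stated level of regularity: in the parabolic case $\partial_t S=\Delta S-S+\rho_1+\rho_2$ and a merely bounded source does not give $\Delta S\in L^\infty$ (Calder\'on--Zygmund fails at $p=\infty$); in the elliptic case $\partial_t S=G*\partial_t(\rho_1+\rho_2)$ involves a time derivative of the density that you have not controlled. The paper's resolution (Lemma \ref{diff_phi}) is to use the continuity equation $\partial_t\rho_i+\nabla\cdot J_i=0$ and integrate by parts so as to write
\begin{equation*}
\partial_t S+v\cdot\nabla_x S=\nabla_x G*\bigl((J-v\rho)(f_1+f_2)\bigr)
\end{equation*}
(plus an initial-layer term $K(\cdot,t)*\rho^{ini}$ when $\delta=1$), so that only $\nabla_x G$ or $\nabla_x K$ --- whose $L^1_x$ norms are finite, respectively time-integrable --- ever acts on the bounded flux $J-v\rho$. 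Without this observation neither the self-map property of $\mathcal{B}_\tau$ nor the Lipschitz estimate on the argument of $\phi_i^\varepsilon$ goes through, and your own closing paragraph acknowledges that this calibration is the unresolved obstacle: it is precisely the substance of the proof.

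A second, smaller defect: you propose to close the contraction with $\|f_i[S_1]-f_i[S_2]\|_{L^\infty_tL^1_x}\le C(\tau)\|S_1-S_2\|_{\mathcal{B}_\tau}$ followed by linear regularity for $\tilde S_1-\tilde S_2$. But recovering the $\mathcal{B}_\tau$-norm of the difference requires $\|\nabla_x G*(\rho[f_1]-\rho[f_2])\|_{L^\infty}$, and Young's inequality with the density difference only in $L^1_x$ would need $\nabla_x G\in L^\infty(\R^d)$, which fails for $d\ge2$ (and $\nabla_x K(\cdot,s)$ lies only in $L^p$ for $p<d/(d-1)$). The difference of solutions must be measured in $L^\infty_x$ (as the paper does, pairing it with $\|\nabla_x G\|_{L^1}$ or $\int_0^t\|\nabla_x K(\cdot,s)\|_{L^1}\,ds$), or at least in $L^q$ with $q>d$. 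Both gaps are repairable, but as written the proposal defers the two estimates on which the argument actually turns.
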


Then we establish the diffusive limit $\varepsilon\to 0$ of these solutions.
The limiting system is the following two-species Keller-Segel type equation~:
\begin{equation}\label{limit_equation}
 \left\{
 \begin{aligned}
  &\partial_t \rho_1=\nabla \cdot \left(D_1\nabla \rho_1-\chi_1[S]\rho_1\right),\\
  &\partial_t \rho_2=\nabla \cdot \left(D_2\nabla \rho_2-\chi_2[S]\rho_2\right),\\
  &\delta \partial_t S=\Delta S-S+\rho_1+\rho_2,\quad \delta=0,1,
 \end{aligned}
 \right.
\end{equation}
where $D_i$ and $\chi_i[S]$ are given for $i=1,2$ by
\begin{equation}\label{coef_limit_eq}
  D_i=\frac{1}{\abs{V}^2\psi_i} \int_V v\otimes v \,dv ,\quad \chi_i[S]=-\int_V\!\!v\,\theta_i(v\cdot \nabla_x S^0)\frac{dv}{\abs{V}}.
\end{equation}
The intial conditions of this system are
\begin{equation*}
 \rho_{1}^{ini}=\int_V f_1^{ini} dv,\quad \rho_{2}^{ini}=\int_V f_2^{ini} dv \quad \mbox{and }  S^{ini}=0 \quad \mbox{if } \delta=1.
\end{equation*}

\begin{theorem}\label{limit}
Let (H1) hold. Assume that the initial data $f_1^{ini},f_2^{ini}$ belong to $L^1_{+}(\Omega)\cap L^{\infty}(\Omega)$.
Then, there exists a subsequence $(f_1^{\varepsilon},f_2^{\varepsilon},S^{\varepsilon})$ of solutions of \eqref{kinetics_eps}--\eqref{eq_S_eps} that converges when $\varepsilon$ tends to zero and we have
\begin{equation*}
 \begin{aligned}
  & (f_1^{\varepsilon},f_2^{\varepsilon}) \overset{\ast}{\rightharpoonup} (\rho_1^0 F,\rho_2^0 F) \quad  \text{in } L^{\infty}_{\text{loc}}((0,\infty),L^q(\Omega)),\quad 1<q<\infty\\ 
  & (S^{\varepsilon},\nabla_x S^{\varepsilon})\rightarrow (S^0,\nabla_x S^0) \quad  \text{in } L^p_{\text{loc}}(\mathbb{R}^d \times (0,\infty)),\quad 1 \leq p \leq \infty,
 \end{aligned}
\end{equation*}
where $F$ is the equilibrium distribution defined in \eqref{def_F} and $(\rho_1^0,\rho_2^0,S^0)$ the solution of \eqref{limit_equation}.
\end{theorem}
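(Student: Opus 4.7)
The strategy follows the classical diffusive-limit scheme for kinetic equations, in the spirit of \cite{CMPS,VCJS,BP}. It rests on three ingredients: uniform-in-$\varepsilon$ bounds on $(f_1^\varepsilon,f_2^\varepsilon,S^\varepsilon)$, strong compactness of $(S^\varepsilon,\nabla_x S^\varepsilon)$, and identification of the limiting flux through a micro-macro decomposition of $f_i^\varepsilon$ around the velocity equilibrium $F$.

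I would first establish the uniform bounds. Mass conservation combined with the positivity, boundedness, and Lipschitz character of $T_i^\varepsilon$ yields $\|f_i^\varepsilon\|_{L^\infty((0,T),L^1\cap L^\infty(\Omega))}\leq C$ uniformly in $\varepsilon$, of the same flavour as in Theorem~\ref{existence}. Injecting this into \eqref{eq_S_eps} and using standard $L^p$ regularity for the heat/elliptic equation produces uniform bounds on $S^\varepsilon$ in $L^\infty((0,T),W^{2,p}(\R^d))$ for every $p\in[1,\infty)$, and hence on $\nabla_x S^\varepsilon$ locally in $L^\infty$; in the parabolic case $\delta=1$ one obtains in addition a uniform bound on $\partial_t S^\varepsilon$. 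Strong compactness of $(S^\varepsilon,\nabla_x S^\varepsilon)$ in $L^p_{\text{loc}}(\R^d\times(0,\infty))$ then follows from the Aubin--Lions--Simon lemma, combined with a bound on $\partial_t\rho_i^\varepsilon = -\nabla_x\!\cdot J_i^\varepsilon$ in a suitable negative Sobolev norm to control the time variable.

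Next I would exploit the symmetric/antisymmetric decomposition \eqref{decomp_T} together with \eqref{ineq_phi} to derive a dissipation estimate. Multiplying \eqref{kinetics_eps} by $f_i^\varepsilon/\phi_i^{S,\varepsilon}[S^\varepsilon]$ and integrating on $\R^d\times V$ should yield
\begin{equation*}
\int_0^T\!\!\int_{\R^d}\!\!\int_V \frac{|f_i^\varepsilon-\rho_i^\varepsilon F|^2}{F}\,dv\,dx\,dt \;=\; O(\varepsilon^2),
\end{equation*}
so that, decomposing $f_i^\varepsilon=\rho_i^\varepsilon F+\varepsilon g_i^\varepsilon$ with $\int_V g_i^\varepsilon\,dv=0$, the fluctuation $g_i^\varepsilon$ is uniformly bounded in $L^2_{\text{loc}}$. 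In particular any weak-$\ast$ limit of $f_i^\varepsilon$ must be of the form $\rho_i^0 F$, which already gives the first convergence in Theorem~\ref{limit}.

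Finally I would pass to the limit in the conservation law obtained by integrating \eqref{kinetics_eps} in $v$, namely $\partial_t\rho_i^\varepsilon+\nabla_x\!\cdot J_i^\varepsilon=0$, where by \eqref{momentum} the flux is $J_i^\varepsilon=\varepsilon^{-1}\!\int_V v f_i^\varepsilon\,dv=\int_V v\,g_i^\varepsilon\,dv$. Substituting the micro-macro decomposition into \eqref{kinetics_eps} and using \eqref{dev_T} gives, at leading order,
\begin{equation*}
\psi_i\abs{V}\,g_i^\varepsilon \;=\; -\frac{1}{\abs{V}}\,v\cdot\nabla_x\rho_i^\varepsilon \;-\;\mathcal{T}_i^1[S^\varepsilon](f_i^\varepsilon) \;+\; O(\varepsilon);
\end{equation*}
multiplying by $v$, integrating, and invoking \eqref{momentum} together with \eqref{coef_limit_eq} identifies the limiting flux as $-D_i\nabla_x\rho_i^0+\rho_i^0\,\chi_i[S^0]$, which is exactly the one appearing in \eqref{limit_equation}. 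The main obstacle is the passage to the limit in the nonlinear moment $\int_V v\,\theta_i(\varepsilon\partial_t S^\varepsilon+v\cdot\nabla_x S^\varepsilon)\,f_i^\varepsilon\,dv$: this requires the strong convergence of $\nabla_x S^\varepsilon$ (not merely of $S^\varepsilon$) paired with the weak-$\ast$ convergence of $f_i^\varepsilon$, and one must argue that the $\varepsilon\partial_t S^\varepsilon$ contribution vanishes — an effect resting precisely on the $\varepsilon$-prefactor of \eqref{def_Teps}, most delicate in the parabolic case $\delta=1$. Together with the $O(\varepsilon)$ dissipation estimate, this is where the bulk of the technical work will lie.
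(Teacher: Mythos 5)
Your overall architecture coincides with the paper's: uniform estimates, the micro--macro decomposition $f_i^\eps=\rho_i^\eps F+\eps r_i^\eps$ with an $L^2$ bound on the fluctuation coming from the symmetric/antisymmetric splitting \eqref{decomp_T}--\eqref{ineq_phi}, Aubin--Lions--Simon compactness for $(S^\eps,\nabla_x S^\eps)$ driven by an $L^2$ bound on the fluxes $J_i^\eps$, and identification of the limiting flux by passing to the limit in the rescaled kinetic equation and in the conservation law. Your diagnosis of the delicate point (the nonlinear moment of $\theta_i(\eps\pa_tS^\eps+v\cdot\nabla_xS^\eps)f_i^\eps$, requiring strong convergence of $\nabla_xS^\eps$ against weak convergence of $f_i^\eps$ and the vanishing of $\eps\pa_tS^\eps$) is exactly where the paper spends its effort in Step~1 and Step~3.

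There is, however, one concrete gap in your first step. You claim a uniform-in-$\eps$ bound $\|f_i^\eps\|_{L^\infty((0,T),L^1\cap L^\infty(\Omega))}\le C$ ``of the same flavour as in Theorem~\ref{existence}''. The bound of Lemma~\ref{estimate} is proved at $\eps=1$ by Duhamel and Gronwall against the full collision operator; after the diffusive rescaling the tumbling term carries a prefactor $\eps^{-2}$, so that argument yields a constant of order $e^{C\tau/\eps^2}$, which is useless as $\eps\to0$. A genuinely uniform $L^\infty$ bound is not established in the paper at all --- note that Theorem~\ref{limit} only claims convergence in $L^q$ for $q<\infty$. What the paper actually proves (Proposition~\ref{uniform_estimate}) is a uniform $L^\infty_{loc}((0,\infty),L^q(\Omega))$ bound for every finite $q$, obtained by multiplying \eqref{kinetics_eps} by $(f_i^\eps)^{q-1}$ and exploiting the structure forced by (H1): the symmetric part dissipates, and the antisymmetric part satisfies $\int_V(\phi_i^{A,\eps})^2/\phi_i^{S,\eps}\,dv'=O(\eps^2)$, which exactly cancels the $\eps^{-2}$ and closes a Gronwall inequality with an $\eps$-independent rate. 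You already invoke this mechanism for your dissipation estimate; you must also use it (not the existence-theorem bound) to obtain the $L^q$ bounds with $q>d$ that feed the elliptic/parabolic regularity of $S^\eps$ in Lemma~\ref{estimate_S} and the $L^2$ bound needed to start the fluctuation estimate. With that substitution, and with the $C^{0,\alpha}$ (rather than $W^{2,p}$, which is not available uniformly in time in the parabolic case) spatial compactness of $\nabla_xS^\eps$, your proof matches the paper's.
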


\begin{remark}
Note that $D_i$ is a diagonal and positive definite matrix,
thanks to the symmetry assumption on $V$, and $\chi_i[S]$ is bounded. This ensures that the macroscopic equation \eqref{limit_equation} subject to the previous initial condition admits 
a unique and global-in-time solution. We refer the reader to the appendix for details.
\end{remark}

\subsection{Formal derivation of drift-diffusion limits}\label{Formal_derivation}
For the sake of clarity, we first derive formally the limit equation \eqref{limit_equation}. We consider Hilbert expansions of $f_1^{\varepsilon},f_2^{\varepsilon}$~:
\begin{equation} 
f_i^{\varepsilon}=f_i^{0}+\varepsilon f_i^{1}+\eps^2 f_i^2+o(\varepsilon^2),\quad \mbox{for } i=1,2. \label{exp_f1}
\end{equation}
Assume $S^\eps=S^0$ is independent of $\eps$ and given by \eqref{eq_S_eps} 
with the right-hand side $\rho_1^0+\rho_2^0$ and consider in this part that
$\theta_i$ is smooth for $i=1,2$.

Injecting \eqref{exp_f1} into the equation for $f_i^{\varepsilon}$
\eqref{kinetics_eps}--\eqref{dev_T}--\eqref{def_T_ik} and identifying 
the terms in $O(1)$ and $O(\varepsilon)$ leads to,
\begin{equation*}
 \begin{aligned}
 & f_i^0=\frac{1}{\abs{V}}\int_V f_i^0 dv= \rho_i^0 F,\\
 & f_i^1=\rho_i^1-\frac{v\cdot \nabla_x f_i^0}{\abs{V}\psi_i}+\frac{1}{\abs{V}}\left(\int_V \theta_i(v'\cdot \nabla_x S^0)f_i^0(v')dv'-\abs{V}\theta_i(v\cdot \nabla_x S^0)f_i^0(v)\right),
 \end{aligned}
\end{equation*}
for $i=1,2$.
Replacing $f_i^0$ in the expression of $f_i^1$ yields
\begin{equation}\label{f_i1}
 f_i^1=\rho_i^1-\frac{v\cdot\nabla_x\rho_i^0}{\abs{V}^2\psi_i} +\frac{\rho_i^0}{\abs{V}}\left(\int_V\!\!\theta_i(v'\cdot \nabla_x S^0)\frac{dv'}{\abs{V}}-\theta_i(v\cdot \nabla_x S^0)\right),\quad \text{for } i=1,2.
\end{equation}
Then for the $O(\eps^2)$ term, we have that 
\begin{equation}\label{eq:f2}
-\psi_i (|V|f_i^2 - \rho_i^2) = \pa_t \rho_i^0 + v\cdot\nabla_x f_i^1
+ \psi_i U_i, \quad \mbox{ for }\ i=1,2,
\end{equation}
where
$$
\begin{array}{ll}
U_i(v) = &
\displaystyle |V| \theta_i(v\cdot\nabla_xS^0) f_i^1 - \int_V \theta_i(v'\cdot\nabla_xS^0) f_i^1(v')\,dv' \\[2mm]
& \displaystyle + \pa_tS^0\Big(|V| \theta'_i(v\cdot\nabla_xS^0) f_i^0 - \int_V \theta'_i(v'\cdot\nabla_xS^0) f_i^0(v')\,dv'\Big).
\end{array}
$$
We notice that $\int_V U_i \,dv= 0$.
Equation \eqref{eq:f2} admits a solution provided the integral over $V$
of the right-hand side vanishes. This implies the conservation law~:
 \begin{equation*}
  \partial_t \rho_i^0+ \nabla \cdot J_i^1=0,
 \end{equation*}
where $J_i^1:=\displaystyle \int_V vf_i^1 dv$, for $i=1,2$.
Using \eqref{f_i1} and formulas of $D_i$ and $\chi_i[S]$ \eqref{coef_limit_eq}, it follows that
\begin{equation*}
 J_i^1=-D_i \nabla_x \rho_i^0+\chi_i[S] \rho_i^0, \qquad \mbox{for } i=1,2.
\end{equation*}
This gives the equation for $\rho_i^0$ for $i=1,2$.

\section{Global existence of solutions of the kinetic model}\label{global_existence}
The purpose of this section is to prove the global existence for System \eqref{kinetics_eps}--\eqref{eq_S_eps}. The Green representation formula 
allows us to decouple \eqref{eq_S_eps} and \eqref{kinetics_eps}. This gives a system which depends only on $f_i^{\varepsilon}$. The fixed-point argument gives the uniqueness and local existence in time of solutions.
Thanks to a-priori estimates on $f_i^{\varepsilon}$, 
we recover global-in-time existence.
Without loss of generality, and for the sake of simplicity of the notation, we fix $\varepsilon=1$ and denote $\phi_i^{max}:=\max{\phi_i^{\varepsilon}}$.

\subsection{A-priori estimates}

We recall that using Bessel potential (see \cite{Evans}), 
the solution $S$ of elliptic/parabolic equation \eqref{eq_S2} is given by 
\begin{equation}
\begin{aligned}
S(x,t)&=\int_{\mathbb{R}^d}\!\!\! G(y)\left(\rho_1+\rho_2\right)(x-y,t) \,dy,\quad \delta=0,\\
S(x,t)&=\int_{0}^t\!\!\!\int_{\mathbb{R}^d}\!\!\! K(y,s)\left(\rho_1+\rho_2\right)(x-y,t-s) \,dy\,ds,\quad \delta=1,
\end{aligned}
\end{equation}
with
\begin{equation}\label{def_Kernel}
\begin{aligned}
 G(x)&:=\frac{1}{2}e^{-\abs{x}},\quad d=1,\\
 G(x)&:=\frac{1}{4\pi} \int_0^{\infty}\!\!\exp{\big(-\pi\frac{|x|^2}{4s}-\frac{s}{4\pi} \big)}\,s^{\tfrac{2-d}{2}}\frac{ds}{s},\quad d \geq 2,\\
 K(x,t)&:=\frac{1}{{(4\pi t)}^{d/2}}\exp{\big(-\frac{|x|^2}{4t}-t \big)},\quad d\geq 1.
\end{aligned}
 \end{equation}
We review some classical results on the integrability of kernels $G,K$ and their gradients.
\begin{lemma}[Estimates on $G$ and $K$]\label{properties_K}
Let $t>0 $. If $d=1$, then there exists a constant $C_1$ such that
\begin{align}
\left\|G\right\|_{L^1(\mathbb{R})}=1 &,\quad \int_0^t \left\| K(\cdot,s)\right\|_{L^1(\mathbb{R})} ds \leq 1,\\
\left\|\nabla_x G\right\|_{L^1(\mathbb{R})}=1 &,\quad \int_0^t \left\|\nabla_x K(\cdot,s)\right\|_{L^1(\mathbb{R})} ds \leq C_1\,t^{\frac{1}{2}}.\label{nabla_GK_1}
\end{align}
For $d\geq 2$, there exists constants $C_p,C_p^{'}$ such that
 \begin{align}
  \left\|G\right\|_{L^p(\mathbb{R}^d)} \leq &C_p ,\quad \int_0^t\left\| K(\cdot,s)\right\|_{L^p(\mathbb{R}^d)} ds \leq C_p \,t^{\frac{d(1-p)}{2p}+1},\quad 1\leq p<\frac{d}{d-2},\label{GK}\\
  \left\|\nabla_x G\right\|_{L^p(\mathbb{R}^d)} \leq &C_p^{'},\quad \int_0^t\left\|\nabla_x K(\cdot,s)\right\|_{L^p(\mathbb{R}^d)} ds \leq C_p^{'}\,t^{\frac{d(1-p)+p}{2p}},\quad 1\leq p<\frac{d}{d-1}.\label{nablaGK}
 \end{align}
\end{lemma}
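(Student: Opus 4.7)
All of these estimates reduce to explicit Gaussian computations together with the subordination identity $G(x) = \int_0^\infty K(x,s)\,ds$ (up to a rescaling of the variable $s$) that relates the Bessel potential $G$ to the heat--with--linear--decay kernel $K$. I would organise the proof in three short steps.

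\emph{Step 1: one dimensional case.} Here $G(x) = \tfrac12 e^{-|x|}$ and $\partial_x G(x) = -\tfrac12 \mathrm{sgn}(x)\,e^{-|x|}$ in the distributional sense, so $\|G\|_{L^1(\R)} = \|\partial_x G\|_{L^1(\R)} = 1$ by direct integration. A Gaussian integration gives $\int_{\R} K(x,s)\,dx = e^{-s}$, hence $\int_0^t \|K(\cdot,s)\|_{L^1(\R)}\,ds = 1-e^{-t}\le 1$. Since $\partial_x K(x,s) = -\tfrac{x}{2s}K(x,s)$, a Gaussian first--moment calculation yields $\|\partial_x K(\cdot,s)\|_{L^1(\R)} = C\,e^{-s}\,s^{-1/2}$, and $\int_0^t s^{-1/2}\,ds = 2\sqrt{t}$ gives \eqref{nabla_GK_1}.

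\emph{Step 2: estimates on $K$ and $\nabla_x K$ for $d\ge 2$.} Direct Gaussian computations (using $\int_{\R^d} e^{-p|x|^2/(4s)}\,dx = (4\pi s/p)^{d/2}$ and the analogous formula with $|x|^p$) give
\[
\|K(\cdot,s)\|_{L^p(\R^d)} = C_{d,p}\,e^{-s}\,s^{\frac{d(1-p)}{2p}},\qquad \|\nabla_x K(\cdot,s)\|_{L^p(\R^d)} = C'_{d,p}\,e^{-s}\,s^{\frac{d(1-p)-p}{2p}}.
\]
Bounding $e^{-s}\le 1$ and integrating on $(0,t)$, the first integral converges at $s=0$ iff the exponent exceeds $-1$, equivalently $p<d/(d-2)$, and then produces precisely the power of $t$ stated in \eqref{GK}. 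The second, analogous, computation requires $p<d/(d-1)$ and yields \eqref{nablaGK}.

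\emph{Step 3: estimates on $G$ and $\nabla_x G$ for $d\ge 2$.} An elementary rescaling $s\mapsto \alpha s$ in the definition of $G$ shows that $G$ is, up to a constant, the time integral of a Gaussian--times--exponential kernel of the same shape as $K$; equivalently one checks that $(-\Delta+1)\int_0^\infty K(x,s)\,ds = \delta_0(x)$. Minkowski's integral inequality together with Step 2 then gives
\[
\|G\|_{L^p(\R^d)} \le \int_0^\infty \|K(\cdot,s)\|_{L^p(\R^d)}\,ds \le C_{d,p}\int_0^\infty e^{-s}\,s^{\frac{d(1-p)}{2p}}\,ds,
\]
which is finite exactly when $p<d/(d-2)$ (the factor $e^{-s}$ handles integrability at infinity). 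The same argument applied to $\nabla_x G$ via $\nabla_x K$ yields the bound in \eqref{nablaGK} for $G$ under the sharper condition $p<d/(d-1)$.

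\emph{Expected main obstacle.} None of the individual computations is hard; the only care needed is to match correctly the integrability threshold of $s^{\alpha}$ near $s=0$ with the claimed conditions $p<d/(d-2)$ and $p<d/(d-1)$, and to justify the subordination representation identifying the defining integral of $G$ with $\int_0^\infty K(x,s)\,ds$ (up to the rescaling of $s$), so that the pointwise--in--$s$ bounds of Step~2 transfer cleanly to $G$ via Minkowski.
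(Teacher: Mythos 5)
Your proposal is correct and follows essentially the same route as the paper: explicit Gaussian computations give $\|K(\cdot,s)\|_{L^p}\sim e^{-s}s^{\frac{d(1-p)}{2p}}$ and $\|\nabla_x K(\cdot,s)\|_{L^p}\sim e^{-s}s^{\frac{d(1-p)-p}{2p}}$, whose integrability at $s=0$ yields exactly the thresholds $p<\tfrac{d}{d-2}$ and $p<\tfrac{d}{d-1}$ and the stated powers of $t$, while the paper likewise bounds $\|G\|_{L^p}$ and $\|\nabla_x G\|_{L^p}$ by passing the $L^p$ norm under the $ds$-integral of the defining formula \eqref{def_Kernel} and evaluating a Gamma integral. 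Your ``subordination'' step needs no extra justification here since the paper's definition of $G$ is already of the form $\int_0^\infty(\text{Gaussian})\times e^{-cs}s^{-d/2}\,ds$, i.e. it coincides with $\int_0^\infty K\,ds$ after harmless rescalings, so Minkowski plus Step 2 closes the argument.
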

\begin{proof}
For $d=1$, simple computations give the result for $G$ and $\nabla_x G$. For $\left\|K(\cdot,s)\right\|_{L^1(\mathbb{R})}$, the transformation  $y=\frac{x}{\sqrt{2s}}$ leads to
\begin{equation*}
 \left\|K(\cdot,t)\right\|_{L^1(\mathbb{R})}=e^{-t}.
\end{equation*}
It follows that 
\begin{equation*}
 \int_0^t\left\| K(\cdot,s)\right\|_{L^1(\mathbb{R})} ds \leq 1.
\end{equation*}
By similar computations, we show that
\begin{equation*}
 \left\|\nabla_x K(\cdot,s)\right\|_1=\frac{1}{{4\pi}^{d/2}}
\left\|y\mapsto ye^{-\frac{|y|^2}{2}}\right\|_{L^1(\mathbb{R})} s^{-\frac{1}{2}} e^{-s}.
 \end{equation*}
By integrating with respect to $s$ in $(0,t)$, we obtain the result.

We now suppose that $d\geq 2$ and compute $L^p$-norms of $G$ and $K$.
\begin{equation*}
  \left\|G\right\|_{L^p(\mathbb{R}^d)}=\frac{1}{4\pi} \int_0^{\infty} 
\left\|x\mapsto e^{-\pi\frac{|x|^2}{4s}}\right\|_{L^p(\mathbb{R}^d)}
\,e^{-\frac{s}{4\pi}}\, s^{\tfrac{2-d}{2}}\frac{ds}{s}.
\end{equation*}
Performing the change of variable $y=\sqrt{\frac{\pi}{2s}}x$ and simplifying yields
 \begin{equation*}
    \left\|G\right\|_{L^p(\mathbb{R}^d)}=\frac{2^{d(\frac{2}{p}-1)}}{p^{d/2p}\pi^{\frac{d}{2}(1-\frac{1}{p})}}\int_0^{\infty}\!\!\!e^{-u}\,u^{\frac{d}{2}(\frac{1}{p}-1)}\,du.
 \end{equation*}
After straightforward computations, we deduce that for all $1\leq p <\frac{d}{d-2}$
\begin{equation*}
\left\|G\right\|_{L^p(\mathbb{R}^d)} \leq  \frac{2^{d(\frac{2}{p}-1)}}{p^{d/2p}\pi^{\frac{d}{2}(1-\frac{1}{p})}} \Gamma \left(1+\frac{d-dp}{2p}\right).
\end{equation*}
A similar transformation applied to $\left\|K(\cdot,s)\right\|_{L^p(\mathbb{R}^d)}$ gives 
\begin{equation*}
 \left\|K(\cdot,s)\right\|_{L^p(\mathbb{R}^d)}=\frac{2^{d(\frac{1}{p}-1)}}{\pi^{\frac{d}{2}(1-\frac{1}{p})}} s^{\frac{d}{2}(\frac{1}{p}-1)}e^{-s}.
\end{equation*}
We conclude that for all $1\leq p <\frac{d}{d-2}$
 \begin{equation*}
  K(\cdot,s) \in L^p(\mathbb{R}^d)\quad \text{and }\int_0^t\!\!\!\left\| K(\cdot,s)\right\|_{L^p(\mathbb{R}^d)} ds \leq C \,t^{\frac{d(1-p)}{2p}+1}.
 \end{equation*}

For the estimates on the gradients, we have
 \begin{equation*}
 \left\| \nabla_x G\right\|_{L^p(\mathbb{R}^d)}=\frac{1}{4\pi} \int_0^{\infty} \frac{\pi}{2s} \left\|xe^{-\pi\frac{|x|^2}{4s}}\right\|_{L^p(\mathbb{R}^d)}\,e^{-\frac{s}{4\pi}}\,s^{\tfrac{2-d}{2}}\frac{ds}{s}.
  \end{equation*}
We apply the two successive transformations $y=\sqrt{\frac{\pi}{2s}}x,t=\frac{s}{4\pi}$ and simplify. We see  that for all $1\leq p <\frac{d}{d-1}$
  \begin{equation*}
  \left\|\nabla_x G \right\|_{L^p(\mathbb{R}^d)}=\frac{2^{3d/2p-(2d+3)/2}}{\pi^{d/2}}\Gamma\left(1/2-\frac{d}{2}(1-1/p)\right) \left\|ye^{-\frac{|y|^2}{2}}\right\|_{L^p(\mathbb{R}^d)}.
 \end{equation*}
Similar computations applied to  $\nabla_x K$ give
\begin{equation*}
 \left\|\nabla_x K(\cdot,s)\right\|_{L^p(\mathbb{R}^d)}=\frac{2^{\frac{d}{2}(1/p-1)-d}}{\pi^{d/2}}\left\|ye^{-\frac{|y|^2}{2}}\right\|_{L^p(\mathbb{R}^d)} s^{\frac{d}{2p}-\frac{d+1}{2}} e^{-s}.
 \end{equation*}
Integrating with respect to $s$ leads to the result.
\end{proof}

\begin{lemma}\label{diff_phi}
Fix $\tau>0$ and $v$ in $V$. Let $\phi$ be a Lipschitz continuous function and $f,\tilde{f}$ be in $L^{\infty}([0,\tau),L^{\infty}(\Omega)\cap L^1(\Omega))$ such that $f,\tilde{f}$ coincide at the time $t=0$ and satisfy in a weak sense
\begin{equation}\label{law_conservation}
 \partial_t \rho(f)+\nabla \cdot J(f)=0,
\end{equation}
where $J$ is a linear and bounded operator on $L^{\infty}(\Omega)$.
Let $S$ and $\tilde{S}$ denote 
  \begin{align*}
 S(x,\tau)&:=G*\rho(f),\quad \tilde{S}(x,\tau):=G*\rho(\tilde{f}),\quad \mbox{ for }\delta=0,\\
 S(x,\tau)&:=\int_0^{\tau}\!\!\!K(\cdot,s)*\rho(f)(\cdot,\tau-s)ds,\quad \tilde{S}(x,\tau):=\int_0^{\tau}\!\!\!K(\cdot,s)*\rho(\tilde{f})(\cdot,\tau-s)ds,\quad \mbox{ for }\delta=1.
 \end{align*}
Then, there exists a positive constant $C$ such that
\begin{equation*}
\|\phi(\partial_t S+v\cdot \nabla_x S)-\phi(\partial_t \tilde{S}+v\cdot \nabla_x \tilde{S})\|_{L^{\infty}(\Omega)}\leq C \left\|\phi\right\|_{C^{0,1}(\mathbb{R})}\left\|\nabla_x G\right\|_{L^1(\mathbb{R}^d)} \|(f-\tilde{f})(\cdot,\tau)\|_{L^{\infty}(\Omega)}
\end{equation*}
for $\delta=0$, and 

$$
\begin{array}{l}
\|\phi(\partial_t S+v\cdot \nabla_x S)-\phi(\partial_t \tilde{S}+v\cdot \nabla_x \tilde{S})\|_{L^{\infty}(\Omega)}  \\[2mm]
\qquad\qquad \leq 
\displaystyle C \|\phi\|_{C^{0,1}(\mathbb{R})}\int_0^{\tau}\!\!\!\|\nabla_x K(\cdot,s)\|_{L^1(\mathbb{R}^d)}\|(f-\tilde{f})(\cdot,\tau-s)\|_{L^{\infty}(\Omega)}ds,
\qquad \mbox{ for }\delta =1.
\end{array}
$$

\end{lemma}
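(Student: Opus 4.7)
The strategy is to use Lipschitz continuity of $\phi$ to reduce the estimate to separate bounds on $\|\partial_tS-\partial_t\tilde S\|_{L^\infty(\R^d)}$ and $\|\nabla_xS-\nabla_x\tilde S\|_{L^\infty(\R^d)}$, and then to read these off from the Green-function representation of $S$ after using the conservation law to trade a time derivative against a space derivative on the kernel. Pointwise in $(x,v)$,
$$\bigl|\phi(\partial_tS+v\cdot\nabla_xS)-\phi(\partial_t\tilde S+v\cdot\nabla_x\tilde S)\bigr|\le \|\phi\|_{C^{0,1}(\R)}\bigl(|\partial_tS-\partial_t\tilde S|+|v|\,|\nabla_xS-\nabla_x\tilde S|\bigr),$$
and $|v|$ is controlled by a constant since $V$ is bounded. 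In addition, the hypotheses supply the two Lipschitz-type estimates $\|J(f)-J(\tilde f)\|_{L^\infty(\Omega)}\le C\|f-\tilde f\|_{L^\infty(\Omega)}$ (linearity and boundedness of $J$) and $\|\rho(f)-\rho(\tilde f)\|_{L^\infty(\R^d)}\le|V|\,\|f-\tilde f\|_{L^\infty(\Omega)}$, both of which will be fed into Young's convolution inequality.

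For the elliptic case $\delta=0$, I would differentiate $S=G*\rho(f)$ directly: $\nabla_xS=\nabla_xG*\rho(f)$, while the conservation law $\partial_t\rho(f)=-\nabla\!\cdot\!J(f)$ combined with an integration by parts in $x$ yields
$$\partial_tS=G*\partial_t\rho(f)=-G*\nabla\!\cdot\!J(f)=-\nabla_xG*J(f).$$
Young's inequality then bounds both $\|\partial_tS-\partial_t\tilde S\|_{L^\infty}$ and $\|\nabla_xS-\nabla_x\tilde S\|_{L^\infty}$ by a constant multiple of $\|\nabla_xG\|_{L^1(\R^d)}\,\|(f-\tilde f)(\cdot,\tau)\|_{L^\infty(\Omega)}$, and plugging this back into the Lipschitz reduction produces the inequality stated for $\delta=0$.

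For the parabolic case $\delta=1$, I would start from the Duhamel representation $S(x,\tau)=\int_0^\tau K(\cdot,s)*\rho(f)(\cdot,\tau-s)\,ds$ and differentiate under the integral sign in $\tau$, which produces a boundary contribution at $s=\tau$ and an interior term that is again rewritten by the conservation law and an integration by parts in $x$:
$$\partial_\tau S=K(\cdot,\tau)*\rho(f)(\cdot,0)-\int_0^\tau\nabla_xK(\cdot,s)*J(f)(\cdot,\tau-s)\,ds.$$
Because $f(\cdot,0)=\tilde f(\cdot,0)$, the boundary term cancels in the difference $\partial_\tau S-\partial_\tau\tilde S$, leaving only an integral against $\nabla_xK$; a parallel but simpler identity handles $\nabla_xS-\nabla_x\tilde S=\int_0^\tau\nabla_xK(\cdot,s)*(\rho(f)-\rho(\tilde f))(\cdot,\tau-s)\,ds$. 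Applying Young's inequality under the $s$-integral and combining with the Lipschitz reduction yields the parabolic bound.

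The main technical obstacle is justifying the differentiation under the integral sign and the two integrations by parts: these require $G$, $K(\cdot,s)$, $\nabla_xG$ and $\nabla_xK(\cdot,s)$ to be integrable in $x$ in the appropriate sense, which is precisely the content of Lemma~\ref{properties_K}. The key conceptual point is that the conservation law allows a $\partial_t$ that a priori falls on $\rho$ to be replaced by a $\nabla_x$ acting on $J$; this is also what makes the cancellation of the boundary term crucial in the parabolic case, since otherwise $K(\cdot,\tau)$ rather than $\nabla_xK(\cdot,s)$ would appear in the estimate and the stated bound would fail.
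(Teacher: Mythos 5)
Your proposal is correct and follows essentially the same route as the paper: use the conservation law to rewrite $\partial_t S$ as a convolution of $\nabla_x G$ (resp. $\nabla_x K$) against $J(f)$, note that the initial-data term cancels in the parabolic case because $f(\cdot,0)=\tilde f(\cdot,0)$, and conclude by the Lipschitz bound on $\phi$ together with Young's inequality and the boundedness of $J$ and $\rho$ on $L^\infty(\Omega)$. The only cosmetic difference is that the paper keeps $\partial_t S+v\cdot\nabla_x S$ as a single convolution $\nabla_x G*(J-v\rho)(f)$ rather than estimating the two terms separately, which changes nothing.
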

\begin{proof}
For $\delta=0$, recalling the expression of $S=G*\rho(f)$, differentiating with respect to $t$ and using the conservation equation \eqref{law_conservation}, we get 
from Green's formula
\begin{equation*}
 \partial_t S=\int_{\mathbb{R}^d}\!\!\!\nabla_x G(y)\cdot J(f) (x-y,\tau)\,dy.
\end{equation*}
We proceed in the same way for $\nabla_x S$. Putting together $\partial_t S$ and $\nabla_x S$ terms, one obtains 
 \begin{equation*}
 \partial_t S+v\cdot \nabla_x S =\int_{\mathbb{R}^d} \nabla_x G(y)\cdot (J-v\rho)(f)(x-y,\tau)\,dy.
\end{equation*}
This formula combined with the Lipschitz continuity of $\phi$ implies
 \begin{equation*}
 \begin{aligned}
    \abs{\phi(\partial_t S+v\cdot \nabla_x S)-\phi(\partial_t \tilde{S}+v\cdot \nabla_x \tilde{S})}&\leq \|\phi\|_{C^{0,1}(\mathbb{R})}\abs{\partial_t(S-\tilde{S})+v\cdot \nabla_x (S-\tilde{S})}\\
    \phantom{ \abs{\phi(\partial_t S+v\cdot \nabla_x S)-\phi(\partial_t \tilde{S}+v\cdot \nabla_x \tilde{S})}}&\leq \|\phi\|_{C^{0,1}(\mathbb{R})}\abs{\nabla_x G*(J-v\rho)(f-\tilde{f})}.
 \end{aligned}
 \end{equation*}
By applying Young's inequality and using either \eqref{nabla_GK_1} or \eqref{nablaGK}, it follows that
\begin{equation*}
 \abs{\phi(\partial_t S+v\cdot \nabla_x S)-\phi(\partial_t \tilde{S}+v\cdot \nabla_x \tilde{S})} \leq \|\phi\|_{C^{0,1}(\mathbb{R})}\left\|\nabla_x G\right\|_{L^1(\mathbb{R}^d)}\|(J-v\rho)(f-\tilde{f})\|_{L^\infty(\Omega)}.
\end{equation*}
From the assumption, $J$ is bounded on $L^{\infty}(\Omega)$. Since $V$ is a bounded domain, the linear operator $\rho$ is also bounded on $L^{\infty}(\Omega)$. Then, we conclude that
\begin{equation*}
 \abs{\phi(\partial_t S+v\cdot \nabla_x S)-\phi(\partial_t \tilde{S}+v\cdot \nabla_x \tilde{S})} \leq C \|\phi\|_{C^{0,1}(\mathbb{R})}\left\|\nabla_x G\right\|_{L^1(\mathbb{R}^d)}\|(f-\tilde{f})(\cdot,\tau)\|_{L^{\infty}(\Omega)}.
\end{equation*}
The case $\delta=1$ is treated similarly. The slight difference comes from the additional term appearing in the expression of $\partial_t S$~: 
\begin{equation*}
  \partial_t S=\int_{\mathbb{R}^d}\!\!\!K(y,\tau)\rho(f)(x-y,0)\,dy+\int_0^{\tau}\!\!\!\int_{\mathbb{R}^d}\!\!\!K(s,y)\partial_t \rho(f)(x-y,\tau-s)\,dy \quad \text{in a weak sense}.
\end{equation*}
Then $\partial_t S+v\cdot \nabla_x S$ becomes
\begin{equation*}
\partial_t S+v\cdot \nabla_x S =\int_{\mathbb{R}^d}\!\!\!K(y,\tau)\rho (f)(x-y,0)\,dy+\int_0^{\tau}\!\!\!\int_{\mathbb{R}^d}\!\!\!\nabla_x K(y,s)(J-v\rho)(f)(x-y,\tau-s)\,dy\,ds.
\end{equation*}
The substraction between $\partial_t S+v\cdot \nabla_x S$ and $\partial_t \tilde{S}+v\cdot \nabla_x \tilde{S}$ has the same form as in the elliptic setting since $f(\cdot,0)=\tilde{f}(\cdot,0)$.
Therefore,
\begin{equation*}
 \abs{\phi(\partial_t S+v\cdot \nabla_x S)-\phi(\partial_t \tilde{S}+v\cdot \nabla_x \tilde{S})} \leq C \|\phi\|_{C^{0,1}(\mathbb{R})}\int_0^{\tau}\!\!\!\|\nabla_x K(\cdot,s)\|_{L^1(\mathbb{R}^d)}\|(f-\tilde{f})(\cdot,\tau-s)\|_{L^{\infty}(\Omega)}ds.
\end{equation*}
\end{proof}

\begin{lemma}[A-priori bounds on $f_1,f_2$]\label{estimate}
Let $\tau>0$ and $(f_1,f_2)$ be a weak solution of \eqref{kinetics}
such that $f_1,f_2$ are  in $L^1((0,\tau),L^1_{+} \cap L^{\infty}(\Omega))$. We assume that tumbling rates $T_1[S],T_2[S]$ defined by \eqref{tumbling_kernel} are positive and bounded.

If the inital data $(f^{ini}_1,f^{ini}_2)$ belongs to $L^{\infty}(\Omega)\times L^{\infty}(\Omega)$, then there exists a constant $C>0$ such that for $i=1,2$ and $t\in (0,\tau)$, we have 
 \begin{equation*}
  \begin{aligned}
    & \left\|f_i(\cdot,t)\right\|_{L^1(\Omega)}=\left\|f^{ini}_i\right\|_{L^1(\Omega)},\\
    &  \left\|f_i(\cdot,t)\right\|_{L^{\infty}((0,\tau),L^{\infty}(\Omega))}\leq C \left\|f^{ini}_i\right\|_{L^{\infty}(\Omega)}e^{\abs{V} \phi_i^{max}\tau}.
  \end{aligned}
 \end{equation*} 
\end{lemma}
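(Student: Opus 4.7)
The plan is to establish the two estimates separately using standard kinetic-theory techniques: direct integration for the $L^1$ bound, and a Duhamel–Grönwall argument along characteristics for the $L^\infty$ bound.

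For the $L^1$ conservation, I would integrate the kinetic equation \eqref{kinetics} over $\Omega = \R^d\times V$. Since $f_i\geq 0$ (the weak solution belongs to $L^1_{+}$ by assumption), no absolute values are needed. The transport term $\int_\Omega v\cdot\nabla_x f_i\,dx\,dv$ vanishes by integration in $x$ (using the $L^1$ regularity in $x$, which gives decay at infinity in a suitable sense). For the collision integral, applying Fubini and swapping the dummy variables $v\leftrightarrow v'$ in the gain term shows that
\begin{equation*}
\int_V\!\!\int_V T_i[S](x,v,v',t)f_i(x,v',t)\,dv'\,dv
= \int_V\!\!\int_V T_i[S](x,v',v,t)f_i(x,v,t)\,dv'\,dv,
\end{equation*}
so the two contributions of the right-hand side cancel exactly. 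This yields $\frac{d}{dt}\|f_i(\cdot,t)\|_{L^1(\Omega)}=0$, hence the conservation of mass.

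For the $L^\infty$ estimate, I would rewrite \eqref{kinetics} in the form $\partial_t f_i+v\cdot\nabla_x f_i+a_i(x,v,t)f_i=b_i[f_i](x,v,t)$, where
\begin{equation*}
a_i(x,v,t):=\int_V T_i[S](x,v',v,t)\,dv',\qquad b_i[f_i](x,v,t):=\int_V T_i[S](x,v,v',t)f_i(x,v',t)\,dv'.
\end{equation*}
Both $a_i$ and $b_i[f_i]$ are nonnegative. Integrating along the characteristics $s\mapsto (x-v(t-s),v)$ and using Duhamel's formula together with $a_i\geq 0$ and $f_i\geq 0$, I would drop the exponential damping coming from $a_i$ to obtain the pointwise bound
\begin{equation*}
f_i(x,v,t)\leq f_i^{ini}(x-vt,v)+\int_0^t\!\!\int_V T_i[S](x-v(t-s),v,v',s)\,f_i(x-v(t-s),v',s)\,dv'\,ds.
\end{equation*}
Using the boundedness assumption $T_i[S]\leq \phi_i^{max}$ and taking the supremum over $(x,v)\in\Omega$ leads to the integral inequality
\begin{equation*}
\|f_i(\cdot,t)\|_{L^\infty(\Omega)}\leq \|f_i^{ini}\|_{L^\infty(\Omega)}+\abs{V}\phi_i^{max}\int_0^t\|f_i(\cdot,s)\|_{L^\infty(\Omega)}\,ds.
\end{equation*}
Grönwall's lemma then yields $\|f_i(\cdot,t)\|_{L^\infty(\Omega)}\leq \|f_i^{ini}\|_{L^\infty(\Omega)}\,e^{\abs{V}\phi_i^{max}t}$, and taking the supremum over $t\in(0,\tau)$ gives the announced estimate with constant $C=1$.

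The only mild technical point is that both manipulations are carried out formally on a weak solution. For the $L^1$ identity, this is handled by testing \eqref{kinetics} against a sequence of cutoff functions $\chi_R(x)\to 1$ and passing to the limit using the $L^1\cap L^\infty$ integrability in $x$. For the $L^\infty$ bound, the characteristic representation can be justified by a standard regularization (mollifying in $(x,v,t)$) or, equivalently, by testing against a suitable family of test functions concentrating on characteristic curves; since $T_i[S]$ is bounded and $f_i\in L^\infty$, the passage to the limit is straightforward. The main conceptual step, and the only one that requires any care, is the positivity-based removal of the loss term that converts the Duhamel identity into an explicit Grönwall-ready inequality.
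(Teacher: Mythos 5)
Your proof is correct and follows essentially the same route as the paper: the $L^1$ identity comes from integrating the equation and using the antisymmetry of the collision operator under $v\leftrightarrow v'$ (which is how the paper derives its conservation law), and the $L^\infty$ bound is obtained exactly as the paper indicates, via the Duhamel formula along characteristics, positivity of the loss term, and Gr\"onwall. The only difference is that the paper defers these details to a reference while you carry them out explicitly.
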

\begin{proof}
By integrating the equation for $f_i$ \eqref{kinetics}  with respect to $v$, we see that $\rho_i$ satisfies the conservation law~:
\begin{equation}\label{cell_conservation}
 \partial_t \rho_i +\nabla \cdot J_i=0,
\end{equation}
with $\displaystyle J_i:=\int_V v f_i$.
It follows that the $L^1$-norm of $\rho_i$ is conserved. We show the second inequality by using the Duhamel representation formula and the Gronwall lemma. 
Since each equation for $i=1$ and $2$ can be treated separately, the proof is 
identical to the single-species case. We refer the reader to \cite{NV}.
\end{proof}

\subsection{Proof of Theorem \ref{existence}}

We now prove the global-in-time existence.
It is standard that if they exist, the solutions $f_1^\eps$ and $f_2^\eps$ 
are nonnegative provided the initial data are nonnegative.
Since $\rho_i$ satisfies the conservation law \eqref{cell_conservation}, by the proof of Lemma~\ref{diff_phi}, $\partial_t S+v\cdot \nabla_x S$ is given for $\delta=0$ and $\delta=1$, respectively, by
\begin{equation*}
 \begin{aligned}
 \partial_t S+v\cdot \nabla_x S &=\int_{\mathbb{R}^d} \nabla_x G(y)\cdot (J-v\rho)(f_1+f_2)(x-y,t)\,dy,\\
 \partial_t S+v\cdot \nabla_x S & =\int_{\mathbb{R}^d}\!\!\!K(y,t)\rho (f_1+f_2)(x-y,0)\,dy\\
 & +\int_0^t\!\!\!\int_{\mathbb{R}^d}\!\!\!\nabla_x K(y,s)(J-v\rho)(f_1+f_2)(x-y,t-s)\,dy\,ds.
 \end{aligned}
\end{equation*}
Replacing $\partial_t S+ v\cdot \nabla_x S$ in \eqref{kinetics} yields in the case $\delta=0$
\begin{equation}\label{syst_ell}
 \left\{
\begin{aligned}
 \partial_t f_i+v\cdot \nabla_x f_i&=\int_{V}\!\!\!\phi_i\left(\nabla_x G*(J-v'\rho)(f_1+f_2)\right)f_i^{'}dv'\\
 &-\abs{V}\phi_i\left(\nabla_x G*(J-v\rho)(f_1+f_2)\right)f_i,\\
 f_i(x,v,t=0)&=f_i^{ini},\quad\text{for }i=1,2.
 \end{aligned}
 \right.
\end{equation}
 For $\delta=1$, we obtain
 \begin{equation}\label{syst_par}
  \left\{
  \begin{aligned}
    \partial_t f_i  +v\cdot \nabla_x f_i &=\int_{V}\phi_i\left(K*\rho (f_1^{ini}+f_2^{ini})+\!\int_0^t\!\!\!\nabla_x K(\cdot,s)*(J-v'\rho)(f_1+f_2)(\cdot,t-s)ds\right)f_i^{'}dv'\\
  &-\abs{V}\phi_i\left(K*\rho (f_1^{ini}+f_2^{ini})+\!\int_0^t\!\!\!\nabla_x K(\cdot,s)*(J-v\rho)(f_1+f_2)(\cdot,t-s)ds\right)f_i,\\
    f_i(x,v,t=0)&=f_i^{ini},\quad\text{for }i=1,2.
   \end{aligned}
   \right.
  \end{equation}
Here, we define the convolution between two vector-valued functions $M,H:\mathbb{R}\rightarrow \mathbb{R}^d$ as
\begin{equation*}
 M*H=\sum_{j=0}^d M_j*H_j,
\end{equation*}
where $M_j,H_j$ are components of $M$ and $H$.

We fix $\tau>0$ and introduce the Banach space $(X^{\tau},\|\cdot \|_{X^{\tau}})$ given by
\begin{equation*}
 \begin{aligned}
  X^{\tau}:&=L^{1}((0,\tau),L^{\infty}(\Omega))\times L^{1}((0,\tau),L^{\infty}(\Omega)),\\
  \|f\|_{X^{\tau}}:&=\int_0^{\tau} \left(\|f_1(\cdot,s)\|_{L^{\infty}(\Omega)}+\|f_2(\cdot,s)\|_{L^{\infty}(\Omega)}\right)ds
 \quad \text{for }f=(f_1,f_2) \in X^{\tau}.
 \end{aligned}
\end{equation*}
We build fixed-point operators $\mathcal{F}(f)=(\mathcal{F}_1(f), \mathcal{F}_2(f))$ 
for Systems \eqref{syst_ell}, \eqref{syst_par} on $X^{\tau}$.
Since these systems are different, we need to consider two cases and treat them separately.

\subsubsection{Proof of the elliptic case, $\delta=0$}
$(\mathcal{F}_1(f), \mathcal{F}_2(f))$ is the weak solution of the system:
\begin{equation}\label{def_F_e}
 \left\{
 \begin{aligned}
  \partial_t \mathcal{F}_i+v\cdot \nabla_x \mathcal{F}_i&=\int_{V}\!\!\!\phi_i\left(\nabla_x G*(J-v'\rho)(f_1+f_2)\right)\mathcal{F}_i^{'}\,dv'\\
  &-\abs{V} \phi_i\left(\nabla_x G*(J-v\rho)(f_1+f_2)\right)\mathcal{F}_i,\\
  \mathcal{F}_i(\cdot,t=0)&=f_i^{ini},\quad \text{for } i=1,2,
 \end{aligned}
 \right.
\end{equation}
with $\mathcal{F}_i:=\mathcal{F}_i(f)(x,v,t)\, \text{ and }\mathcal{F}_i^{'}:=\mathcal{F}_i(f)(x,v',t)$.

\medskip
For $f=(f_1,f_2)\,\text{and }g=(g_1,g_2)$ in $X^{\tau}$, we define the mapping $\mathcal{F}^{fg}:= \mathcal{F}(f)-\mathcal{F}(g)$ whose components $\mathcal{F}_i^{fg}$ are defined by
\begin{equation*}
 \mathcal{F}_i^{fg}:=\mathcal{F}_i(f)-\mathcal{F}_i(g),\quad \text{for } i=1,2.
\end{equation*}
Substracting equations for $\mathcal{F}_i(f)$ and $\mathcal{F}_i(g)$ and collecting terms leads to
\begin{equation}\label{eq_fp}
 \left\{
 \begin{aligned}
  &\partial_t \mathcal{F}_i^{fg}  +v\cdot \nabla_x \mathcal{F}_i^{fg}+ 
\abs{V} \phi_i(\nabla_x G*(J-v\rho)(f_1+f_2))\mathcal{F}_i^{fg}=\mathcal{G}_i^{fg},\\
  & \mathcal{F}_i^{fg}(\cdot,t=0)=0,\quad \text{for } i=1,2,
 \end{aligned}
 \right.
\end{equation}
where $\mathcal{G}_i^{fg}$ is defined by 
\begin{multline}\label{def_G}
 \mathcal{G}_i^{fg}(x,v,t):=\int_{V}\phi_i\left(\nabla_x G*(J-v'\rho)(f_1+f_2)\right)\mathcal{F}_i^{fg}(v')\,dv' \\
 -\mathcal{F}_i(g)\abs{V}\left(\phi_i \left(\nabla_x G*(J-v\rho)(f_1+f_2)\right) 
 -\phi_i\left(\nabla_x G*(J-v\rho)(g_1+g_2)\right) \right) \\
  +\int_{V}\mathcal{F}_i(g)(v')\left(\phi_i\left(\nabla_x G*(J-v'\rho)(f_1+f_2)\right) 
 - \phi_i\left(\nabla_x G*(J-v'\rho)(g_1+g_2)\right)\right)dv'. 
\end{multline}

Thanks to the Duhamel formula, $\mathcal{F}_i^{fg}$ writes
\begin{multline}\label{Duh}
 \mathcal{F}_i^{fg}(x,v,t)=\int_0^t\!\!\exp \left(-\int_s^t \phi_i(\nabla_x G*(J-v\rho)(f_1+f_2))(x-v(s-u),u)du\right)\\
 \times \mathcal{G}_i^{fg}(x-v(t-s),v,s)\,ds.
\end{multline}
We would like to bound $\displaystyle \int_0^{\tau}\|\mathcal{F}_1^{fg}(\cdot,t)\|_{L^{\infty}(\Omega)}dt$ and $\displaystyle \int_0^{\tau}\|\mathcal{F}_2^{fg}(\cdot,t)\|_{L^{\infty}(\Omega)}dt$ by the $X^{\tau}$-norm of $f-g$.
We deal with the first term.
By using the boundness of $\phi_1$, we get
\begin{align*}
 & \abs{\mathcal{G}_1^{fg}(x,v,s)}\leq \phi_1^{max}\int_{V}\abs{\mathcal{F}_1^{fg}(v')\,dv'}\\
 & \phantom{\abs{\mathcal{G}_1^{fg}(x,v,s)} \leq }+\abs{\mathcal{F}_1(g)}\abs{V}\abs{\phi_1(\nabla_x G*(J-v\rho)(f_1+f_2))-\phi_1(\nabla_x G*(J-v\rho)(g_1+g_2))}\\
 & \phantom{\abs{\mathcal{G}_1^{fg}(x,v,s)} \leq  }+\int_{V}\abs{\mathcal{F}_1(g)(v')}\abs{\phi_1(\nabla_x G*(J-v'\rho)(f_1+f_2))- \phi_1(\nabla_x G*(J-v'\rho)(g_1+g_2))}dv'. 
\end{align*}
Since $\phi_1$ is a Lipschitz continuous function, by applying Lemma~\ref{diff_phi} with $f=f_1+f_2$ and $g=g_1+g_2$, we find that
\begin{multline*}
\abs{\mathcal{G}_1^{fg}(x,v,s)}\leq \phi_1^{max} \int_{V}\abs{\mathcal{F}_1^{fg}(x,v',s)}dv'\\
+C \|\phi_1\|_{C^{0,1}(\mathbb{R})}\|\nabla_x G\|_{L^1(\mathbb{R}^d)}\left(\abs{\mathcal{F}_1(g)}+\int_V\abs{\mathcal{F}_1(g)}\right) 
\sum_{i=1,2}\left\|(f_i-g_i)(\cdot,s)\right\|_{L^{\infty}(\Omega)}.
\end{multline*}
Hence, 
\begin{multline*}
 \left\|\mathcal{G}_1^{fg}(\cdot,s)\right\|_{L^{\infty}(\Omega)} \leq \phi_1^{max} \abs{V} \|\mathcal{F}_1^{fg}(\cdot,s)\|_{L^{\infty}(\Omega)} \\
+ C \big(1+\abs{V}\big) \|\phi_1\|_{C^{0,1}(\mathbb{R})}\|\nabla_x G\|_{L^1(\mathbb{R}^d)}
\|\mathcal{F}_1(g)(\cdot,s)\|_{L^{\infty}(\Omega)}\sum_{i=1,2}\left\|(f_i-g_i)(\cdot,s)\right\|_{L^{\infty}(\Omega)} .
\end{multline*}
Since $\phi_1$ is nonnegative, from \eqref{Duh} we have 
\begin{equation*}
 \abs{\mathcal{F}_1^{fg}(x,v,t)} \leq \int_0^t \abs{\mathcal{G}_1^{fg}(x-v(t-s),v,s)}ds.
\end{equation*}
Taking the $L^{\infty}$-norm on $\Omega$ of both sides gives
\begin{equation*}
 \left\|\mathcal{F}_1^{fg}(\cdot,t)\right\|_{L^{\infty}(\Omega)}\leq \int_0^t\left\|\mathcal{G}_1^{fg}(\cdot,s)\right\|_{L^{\infty}(\Omega)}.
\end{equation*}
Recalling the estimate on $\left\|\mathcal{G}_1^{fg}(\cdot,s)\right\|_{L^{\infty}(\Omega)}$ above, it follows that
\begin{multline*}
 \left\|\mathcal{F}_1^{fg}(\cdot,t)\right\|_{L^{\infty}(\Omega)}  \leq \phi_1^{max} \abs{V}\int_0^t \|\mathcal{F}_1^{fg}(\cdot,s)\|_{L^{\infty}(\Omega)}ds\\
 +C_1\int_0^t \left\|\mathcal{F}_1(g)(\cdot,s)\right\|_{L^{\infty}(\Omega)}
 \sum_{i=1,2}\left\|(f_i-g_i)(\cdot,s)\right\|_{L^{\infty}(\Omega)}ds.
\end{multline*}
The bound on $\left\|\mathcal{F}_1(g)(\cdot,s)\right\|_{L^{\infty}(\Omega)}$ is similar to the one given by Lemma~\ref{estimate}. By using this estimate, we get
\begin{multline*}
 \left\|\mathcal{F}_1^{fg}(\cdot,t)\right\|_{L^{\infty}(\Omega)}\leq \phi_1^{max} \abs{V} \int_0^t \left\|\mathcal{F}_1^{fg}(\cdot,s)\right\|_{L^{\infty}(\Omega)}ds\\
 +C_1^{'} e^{\phi_1^{max} \abs{V}t} \int_0^t \sum_{i=1,2}\left\|(f_i-g_i)(\cdot,s)\right\|_{L^{\infty}(\Omega)}ds.
\end{multline*}
The Gronwall lemma asserts that
\begin{align*}
 \int_0^{\tau} \left\|\mathcal{F}_1^{fg}(\cdot,t)\right\|_{L^{\infty}(\Omega)}dt
 &\leq C_1^{'}\big(e^{\phi_1^{max} \abs{V}\tau}-1\big)\int_0^{\tau}\sum_{i=1,2}\left\|(f_i-g_i)(\cdot,t)\right\|_{L^{\infty}(\Omega)}dt. 
\end{align*}
We obtain a similar estimate on $\mathcal{F}_2^{fg}$ by replacing $\phi_1^{max}$ by $\phi_2^{max}$.
Summing the estimates on $\mathcal{F}_i^{fg}$ for $i=1,2$, we deduce that
\begin{equation*}
 \int_0^{\tau} \sum_{i=1,2}\|\mathcal{F}_i^{fg}(\cdot,t)\|_{L^{\infty}(\Omega)} dt \leq C \big(e^{\phi^{max}\tau}-1\big)\int_0^{\tau} \sum_{i=1,2}\left\|(f_i-g_i)(\cdot,t)\right\|_{L^{\infty}(\Omega)}dt,
\end{equation*}
with $\phi^{max}=\max{(\phi_1^{max},\phi_2^{max})}$.
Recalling the definition of the $X^{\tau}$-norm, we conclude that
\begin{equation*}
 \left\| \mathcal{F}(f)-\mathcal{F}(g)\right\|_{X^{\tau}} \leq C \big(e^{\phi^{max}\tau}-1\big) \|f-g\|_{X^{\tau}}.
\end{equation*}
Therefore, there exists a sufficiently small time $\tau_0>0$  such that $\mathcal{F}$ is a contraction mapping  on $X^{\tau_0}=L^{1}((0,\tau_0),L^{\infty}(\Omega))\times L^{1}((0,\tau_0),L^{\infty}(\Omega))$.
Thus, Banach fixed-point Theorem gives the existence and uniqueness of weak solution $f=(f_1,f_2)$ of \eqref{kinetics_eps}--\eqref{eq_S_eps} in $X^{\tau_0}$.

\medskip
From the a-priori estimates of Lemma \ref{estimate}, we see that $f(\cdot,\tau_0)$ is bounded in $L^1(\Omega) \cap L^{\infty}(\Omega)$. We now consider our problem with the initial condition $f(\cdot,\tau_0)$, apply the same strategy and obtain the existence up to the time $2\tau_0$.
By iterating this procedure, we extend the solution to $L^{1}((0,\tau),L^{\infty}(\Omega))\times L^{1}((0,\tau),L^{\infty}(\Omega))$.

\subsubsection{Proof of the parabolic case, $\delta=1$}
In this case, $(\mathcal{F}_1(f),\mathcal{F}_2(f))$ verifies
\begin{equation}\label{def_F_p}
 \left\{
 \begin{aligned}
 & \partial_t \mathcal{F}_i+v\cdot \nabla_x \mathcal{F}_i=\int_{V}\phi_i\left(\int_0^t\!\!\nabla_x K(\cdot,s)*(J-v'\rho)(f_1+f_2)(\cdot,t-s)\,ds\right)\mathcal{F}_i^{'}dv,'\\
 & \phantom{\partial_t \mathcal{F}_i+v\cdot \nabla_x \mathcal{F}_i=}
  -\abs{V}\phi_i\left(\int_0^t \nabla_x K(\cdot,s)*(J-v\rho)(f_1+f_2)(\cdot,t-s)\,ds\right)\mathcal{F}_i,\\
 & \mathcal{F}_i(\cdot,t=0)=f_i^{ini},\quad\text{for } i=1,2.
 \end{aligned}
 \right.
\end{equation}
Therefore, $(\mathcal{F}_1^{fg},\mathcal{F}_2^{fg})$ satisfies
\begin{equation*}\label{eq_fe}
 \left\{
 \begin{aligned}
  &\partial_t \mathcal{F}_i^{fg}  +v\cdot \nabla_x \mathcal{F}_i^{fg}+\abs{V}\phi_i
  \left(\int_0^t \nabla_x K(\cdot,s)*(J-v\rho)(f_1+f_2)(\cdot,t-s)\,ds\right) \mathcal{F}_i^{fg}=\mathcal{G}_i^{fg},\\
  & \mathcal{F}_i^{fg}(\cdot,t=0)=0,\quad \text{for } i=1,2,
 \end{aligned}
 \right.  
\end{equation*}
where $\mathcal{G}_i^{fg}$ is defined by 
\begin{multline*}
 \mathcal{G}_i^{fg}(x,v,t):=\int_{V}\phi_i(\int_0^t\!\!\nabla_x K(\cdot,s)*(J-v'\rho)(f_1+f_2)(\cdot,t-s)ds)\mathcal{F}_i^{fg}(v')dv'\\
-\mathcal{F}_i(g)(\phi_i (\int_0^t\!\!\nabla_x K(\cdot,s)*(J-v\rho)(f_1+f_2)(\cdot,t-s)ds))-\phi_i(\int_0^t\!\!\nabla_x K(\cdot,s)*(J-v\rho)(g_1+g_2)(\cdot,t-s)ds))\\
  +\mathcal{F}_i(g)(v')(\phi_i(\int_0^t\!\!\nabla_x K(\cdot,s)*(J-v'\rho)(f_1+f_2)(\cdot,t-s)ds)-\phi_i(\int_0^t\!\!\nabla_x K(\cdot,s)*(J-v'\rho)(g_1+g_2)(\cdot,t-s)ds))dv'.
\end{multline*}
By the Duhamel formula, we obtain the expression of $\mathcal{F}_i^{fg}(x,v,t)$
\begin{multline}\label{Duhamel}
 \mathcal{F}_i^{fg}(x,v,t)=\!\!\int_0^t \exp \left(-\int_s^t \phi_i\left(\int_0^u\!\!\nabla_x K(\cdot,r)*(J-v\rho)(f_1+f_2)(x-v(s-u),u-r)dr\right)du \right)\\
\times  \mathcal{G}_i^{fg}(s,x-v(t-s),v) \,ds.
\end{multline}
Using the boundness of $\phi_1$ and applying Lemma~\ref{diff_phi}, we get  
\begin{multline*}
 \abs{\mathcal{G}_1^{fg}(x,v,s)}  \leq \phi_1^{max}\int_{V}\abs{\mathcal{F}_1^{fg}(x,v',s)}dv'+C\big(\abs{\mathcal{F}_1(g)}+\int_V\abs{\mathcal{F}_1(g)}\big)\|\phi_1\|_{C^{0,1}(\mathbb{R})}\\ 
 \times \int_{0}^s\!\!\|\nabla_x K(\cdot,r)\|_{L^1(\mathbb{R}^d)} \sum_{i=1,2}\left\|(f_i-g_i)(\cdot,s-r)\right\|_{L^{\infty}(\Omega)}dr.
\end{multline*}
Taking the $L^{\infty}$-norm on $\Omega$ of both sides gives
\begin{multline}
\label{calGbound}
 \|\mathcal{G}_1^{fg}(\cdot,s)\|_{L^{\infty}(\Omega)}\leq  \phi_1^{max}\abs{V}\|\mathcal{F}_1^{fg}(\cdot,s)\|_{L^{\infty}(\Omega)}+C(1+\abs{V})\|\phi_1\|_{C^{0,1}(\mathbb{R})} \|\mathcal{F}_1(g)(\cdot,s)\|_{L^{\infty}(\Omega)}\\
 \times \int_{0}^s\!\!\|\nabla_x K(\cdot,r)\|_{L^1(\mathbb{R}^d)} \sum_{i=1,2}\left\|(f_i-g_i)(\cdot,s-r)\right\|_{L^{\infty}(\Omega)}dr.
\end{multline}
From \eqref{Duhamel}, it is clear that
 \begin{equation*}
 \abs{\mathcal{F}_1^{fg}(x,v,t)} \leq \int_0^t \abs{\mathcal{G}_1^{fg}(x-v(t-s),v,s)}ds.
\end{equation*}
It follows that
 \begin{equation*}
 \|\mathcal{F}_1^{fg}(\cdot,t)\|_{L^{\infty}(\Omega)} \leq \int_0^t\|{\mathcal{G}_1^{fg}(\cdot,s)}\|_{L^{\infty}(\Omega)}ds.
\end{equation*}
Using the bound on $\|{\mathcal{G}_1^{fg}(\cdot,s)}\|_{L^{\infty}(\Omega)}$ 
\eqref{calGbound} yields
\begin{multline*}
  \left\|\mathcal{F}_1^{fg}(\cdot,t)\right\|_{L^{\infty}(\Omega)}  \leq \phi_1^{max}\abs{V}\int_0^t \left\|\mathcal{F}_1^{fg}(\cdot,s)\right\|_{L^{\infty}(\Omega)}ds+ C_1\int_0^t \left\|\mathcal{F}_1(g)(\cdot,s)\right\|_{L^{\infty}(\Omega)}\\
 \times \left(\int_{0}^s\|\nabla_x K(\cdot,r)\|_{L^1(\mathbb{R}^d)} \sum_{i=1,2}\left\|(f_i-g_i)(\cdot,s-r)\right\|_{L^{\infty}(\Omega)}dr\right) ds. 
\end{multline*}
By using the estimate on $\left\|\mathcal{F}_1(g)(\cdot,s)\right\|_{L^{\infty}(\Omega)}$ which is the same as in Lemma~\ref{estimate}, we have
\begin{multline*}
\left\|\mathcal{F}_1^{fg}(\cdot,t)\right\|_{L^{\infty}(\Omega)}\leq \phi_1^{max}\abs{V} \int_0^t \left\|\mathcal{F}_1^{fg}(\cdot,s)\right\|_{L^{\infty}(\Omega)}ds \\
+ C_1^{'} e^{\phi_1^{max}\abs{V} t}\int_0^t \int_{0}^s\|\nabla_x K(\cdot,r)\|_{L^1(\mathbb{R}^d)} \sum_{i=1,2}\left\|(f_i-g_i)(s-r,\cdot,\cdot)\right\|_{L^{\infty}(\Omega)}dr\,ds. 
\end{multline*}
We bound the integral over $(0,s)$ by the integral over $(0,t)$. By applying a change of variable and the Fubini Theorem, it follows that
\begin{multline*}
\left\|\mathcal{F}_1^{fg}(\cdot,t)\right\|_{L^{\infty}(\Omega)}\leq \phi_1^{max}\abs{V}\int_0^t \left\|\mathcal{F}_1^{fg}(\cdot,s)\right\|_{L^{\infty}(\Omega)}ds \\
+ C_1^{'} e^{\phi_1^{max}\abs{V}t} \left(\int_0^t \|\nabla_x K(\cdot,r)\|_{L^1(\mathbb{R}^d)} dr \right)\left(\int_{0}^t \sum_{i=1,2}\left\|(f_i-g_i)(\cdot,s)\right\|_{L^{\infty}(\Omega)}ds\right). 
\end{multline*}
By Lemma~\ref{properties_K}, $\|\nabla_x K(\cdot,r)\|_{L^1(\mathbb{R}^d)}$ is integrable and we have
\begin{multline*}
 \left\|\mathcal{F}_1^{fg}(\cdot,t)\right\|_{L^{\infty}(\Omega)}\leq \phi_1^{max}\abs{V} \int_0^t \left\|\mathcal{F}_1^{fg}(\cdot,s)\right\|_{L^{\infty}(\Omega)}ds\\
+ C_1^{'}e^{\phi_1^{max} \abs{V}t}t^{1/2}\int_{0}^t \sum_{i=1,2}\left\|(f_i-g_i)(\cdot,s)\right\|_{L^{\infty}(\Omega)}ds.
\end{multline*}
Applying the Gronwall Lemma gives a bound on $\left\|\mathcal{F}_1^{fg}(\cdot,t)\right\|_{L^{\infty}(\Omega)}$ and the rest of the proof is analogous to the elliptic case.

\section{Rigorous proof of Drift-diffusion limit}\label{drift_diffusion}

In this section, we investigate the diffusive limit of the kinetic model, i.e we prove Theorem~\ref{limit}. We start by giving estimates on $S$ for given functions $\rho_1,\rho_2$ and useful inequalities. Afterwards, we state a proposition which gives uniform estimates on $f_1^{\varepsilon},f_2^{\varepsilon},S^{\varepsilon}$.
Finally, we conclude by using Aubin-Lions-Simon compactness lemma~\cite{JS}.
\subsection{A-priori estimates}
\begin{lemma}[Estimates on $S^{\varepsilon}$]\label{estimate_S}
Fix $\tau>0$. Let $p,q,\alpha$ be such that $1<p<\infty, d<q<\infty, 0<\alpha<1-\frac{d}{q}$. Assume that $\rho_1^{\varepsilon},\rho_2^{\varepsilon}\in L^{\infty}([0,\tau],L^1(\mathbb{R}^d)\cap L^{q}(\mathbb{R}^d))$.
Then, we have
\begin{itemize}
 \item 
 For $\delta=0$, $S^{\varepsilon}(\cdot,t)$ is bounded in $C^{1,\alpha}(\mathbb{R}^d)$ for all $t  \in [0,\tau]$ and
there exists a constant $c$ independent of $\varepsilon$ and $t$ such that
\begin{equation*}
 \|S^{\varepsilon}(\cdot,t)\|_{C^{1,\alpha}(\mathbb{R}^d)}\leq c\left(\|(\rho_1^{\varepsilon}+\rho_2^{\varepsilon})(\cdot,t)\|_{L^1(\mathbb{R}^d)}+ \|(\rho_1^{\varepsilon}+\rho_2^{\varepsilon})(\cdot,t)\|_{L^q(\mathbb{R}^d)}\right).
\end{equation*}
 \item 
For $\delta=1$, $S^{\varepsilon}$ is bounded in $L^{\infty}([0,\tau],C^{1,\alpha}(\mathbb{R}^d))\cap L^{\infty}([0,\tau],W^{1,p}(\mathbb{R}^d))$ and
there exists a constant $c_{\tau}$ independent of $\varepsilon$ such that
\begin{equation*}
 \|S^{\varepsilon}\|_{L^{\infty}([0,\tau],W^{1,p}(\mathbb{R}^d))}+\|S^{\varepsilon}\|_{L^{\infty}([0,\tau],C^{1,\alpha}(\mathbb{R}^d))}\leq c_{\tau}
 \big(\|\rho_1^{\varepsilon}+\rho_2^{\varepsilon}\|_{L^{\infty}([0,\tau],L^1(\mathbb{R}^d))}+ \|\rho_1^{\varepsilon}+\rho_2^{\varepsilon}\|_{L^{\infty}([0,\tau],L^q(\mathbb{R}^d))}\big).
\end{equation*}
\end{itemize}
\end{lemma}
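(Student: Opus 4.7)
The strategy is to start from the explicit kernel representations of $S^\varepsilon$ recalled at the beginning of Section~\ref{global_existence}, combine them with the $L^p$-bounds on $G$, $K$ and their gradients from Lemma~\ref{properties_K} via Young's inequality, and then use a standard elliptic/parabolic regularity argument followed by Morrey's embedding to reach $C^{1,\alpha}$.

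\textbf{Elliptic case $\delta=0$.} Here $S^\varepsilon(\cdot,t) = G\ast(\rho_1^\varepsilon+\rho_2^\varepsilon)(\cdot,t)$. I would first write, by Young's inequality,
\[
 \|S^\varepsilon(\cdot,t)\|_{L^\infty(\R^d)} \leq \|G\|_{L^{q'}(\R^d)}\,\|(\rho_1^\varepsilon+\rho_2^\varepsilon)(\cdot,t)\|_{L^q(\R^d)},
\]
with $q'=q/(q-1)$, and similarly for $\nabla_x S^\varepsilon$ using $\nabla_x G$. Since $q>d$, one has $q'<d/(d-1)<d/(d-2)$, so both $G$ and $\nabla_xG$ lie in $L^{q'}(\R^d)$ by \eqref{GK}--\eqref{nablaGK}; the extra $L^1$-norm of the right-hand side appears when one estimates $\|S^\varepsilon\|_{L^\infty}$ directly with $\|G\|_{L^\infty(\{|x|\geq 1\})}$ on the far field and with $\|G\|_{L^{q'}}$ on the singular part of $G$, i.e.\ by splitting $G=G\mathds{1}_{|x|<1}+G\mathds{1}_{|x|\geq 1}$ and applying Young with $(L^{q'},L^q)$ on the first piece and $(L^\infty,L^1)$ on the second. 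This already yields the $W^{1,\infty}$-bound. For the $C^{1,\alpha}$ piece I would invoke elliptic regularity for $-\Delta S^\varepsilon+S^\varepsilon=\rho_1^\varepsilon+\rho_2^\varepsilon$: since $\rho_1^\varepsilon+\rho_2^\varepsilon \in L^q(\R^d)$ with $q>d$, the Bessel potential $(I-\Delta)^{-1}$ maps continuously $L^q$ into $W^{2,q}$, and Morrey's embedding $W^{2,q}(\R^d)\hookrightarrow C^{1,\alpha}(\R^d)$ for $0<\alpha<1-d/q$ closes the estimate.

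\textbf{Parabolic case $\delta=1$.} The starting point is
\[
 S^\varepsilon(x,t)=\int_0^t\!\! K(\cdot,s)\ast(\rho_1^\varepsilon+\rho_2^\varepsilon)(\cdot,t-s)\,ds.
\]
Applying Minkowski in time and Young in space I obtain, for any $1\leq p\leq \infty$,
\[
 \|S^\varepsilon(\cdot,t)\|_{L^p(\R^d)}
 \leq \Bigl(\int_0^t \|K(\cdot,s)\|_{L^1(\R^d)}\,ds\Bigr)\,\|\rho_1^\varepsilon+\rho_2^\varepsilon\|_{L^\infty([0,\tau],L^p(\R^d))},
\]
and the analogous inequality for $\nabla_x S^\varepsilon$ with $\nabla_x K$ in place of $K$, using the bounds \eqref{nabla_GK_1}, \eqref{nablaGK} of Lemma~\ref{properties_K}. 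Interpolating the $L^1$ and $L^q$ control of the $\rho_i^\varepsilon$ then yields $L^p$-bounds on $S^\varepsilon$ and $\nabla_x S^\varepsilon$ for every $1<p<\infty$, i.e.\ the $W^{1,p}$-part of the claim. For the Hölder piece I would use parabolic regularity for $\partial_t S^\varepsilon-\Delta S^\varepsilon+S^\varepsilon=\rho_1^\varepsilon+\rho_2^\varepsilon$ with zero initial datum: $L^q$ maximal regularity on the time interval $[0,\tau]$ gives $S^\varepsilon\in L^q((0,\tau),W^{2,q}(\R^d))$, and combining this with the Morrey embedding $W^{2,q}\hookrightarrow C^{1,\alpha}$ (valid for $q>d$ and $\alpha<1-d/q$), plus the previously obtained uniform-in-time $L^\infty([0,\tau],W^{1,p})$-bound, lets me upgrade to $L^\infty([0,\tau],C^{1,\alpha}(\R^d))$.

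\textbf{Main obstacle.} The mildly delicate point is not the $L^p$- or $W^{1,p}$-bound, which follows mechanically from Lemma~\ref{properties_K} and Young; rather, it is obtaining uniform-in-$\varepsilon$ Hölder control of $\nabla_x S^\varepsilon$ in the parabolic case from a merely time-integrable, non-pointwise bound on $\nabla_x K$. I would handle this by avoiding any pointwise analysis of $\nabla_x K$ and relying instead on $L^q$-parabolic regularity together with Morrey embedding, so that the Hölder constant depends only on $\|\rho_1^\varepsilon+\rho_2^\varepsilon\|_{L^\infty([0,\tau],L^q)}$ and on $\tau$, but not on $\varepsilon$. The constants $c$ and $c_\tau$ in the statement will come, respectively, from the operator norms of $(I-\Delta)^{-1}:L^q\to W^{2,q}$ and from the time integrals $\int_0^\tau\|K(\cdot,s)\|_{L^1}\,ds$, $\int_0^\tau\|\nabla_x K(\cdot,s)\|_{L^1}\,ds$, which are finite (and grow at most like a fixed power of $\tau$) by Lemma~\ref{properties_K}.
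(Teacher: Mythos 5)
Your elliptic argument and the $W^{1,p}$ part of the parabolic argument match the paper: the paper likewise treats $\delta=0$ by elliptic regularity ($L^q\to W^{2,q}$), interpolation between the $L^1$ and $L^q$ bounds on $\rho_1^\varepsilon+\rho_2^\varepsilon$, and Morrey's embedding $W^{2,q}\hookrightarrow C^{1,\alpha}$; and for $\delta=1$ it obtains the $L^\infty([0,\tau],W^{1,p})$ bound exactly as you do, by Young's inequality with $K$ and $\nabla_x K$ using Lemma~\ref{properties_K} and interpolation.

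However, the step you yourself single out as the main obstacle --- the uniform-in-time $C^{1,\alpha}$ bound on $\nabla_x S^\varepsilon$ in the parabolic case --- is where your argument has a genuine gap. Maximal $L^q$ regularity yields $S^\varepsilon\in L^q((0,\tau),W^{2,q}(\R^d))$, i.e.\ only $L^q$-in-time control of the $W^{2,q}$ norm; after Morrey this gives $L^q((0,\tau),C^{1,\alpha})$, not $L^\infty((0,\tau),C^{1,\alpha})$. The proposed upgrade by combining with the $L^\infty([0,\tau],W^{1,p})$ bound does not go through: any interpolation of the form $\|\nabla S(t)\|_{C^{0,\alpha}}\lesssim \|S(t)\|_{W^{2,q}}^{\theta}\|S(t)\|_{W^{1,\infty}}^{1-\theta}$ with $\theta>0$ still requires a pointwise-in-time bound on $\|S(t)\|_{W^{2,q}}$, which an $L^q_t$ bound does not provide (indeed $\sup_t\|S^\varepsilon(t)\|_{W^{2,q}}$ is in general \emph{not} finite here: the Duhamel integrand behaves like $(t-s)^{-1}\|\rho^\varepsilon(s)\|_{L^q}$ at the $W^{2,q}$ level, which is borderline divergent). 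The paper instead proves the H\"older estimate on $\nabla_x S^\varepsilon$ directly from the kernel: it splits the increment $\nabla S^\varepsilon(x,t)-\nabla S^\varepsilon(x',t)$ into the integral over $B_x(2r)$, $r=|x-x'|$, estimated using the pointwise form of $\nabla_x K$ and the elementary bound $|x-y|\le (2r)^{\alpha}|x-y|^{1-\alpha}$, and the integral over the complement, estimated via the Hessian $D^2_xK$ and the mean value theorem; in both pieces the resulting time singularity is $(t-s)^{\frac{1-\alpha}{2}+\frac{d}{2p'}-\frac{d}{2}-1}$, which is integrable on $(0,\tau)$ precisely because $\alpha<1-\frac{d}{q}$. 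Equivalently you could argue via the Duhamel formula and the semigroup smoothing estimate $\|e^{t(\Delta-1)}\|_{L^q\to C^{1,\alpha}}\lesssim t^{-\frac{1}{2}(1+\alpha+\frac{d}{q})}$, whose exponent is $<1$ under the same condition; but some such quantitative, time-singular smoothing argument is needed, and abstract maximal regularity plus Morrey will not deliver the stated $L^\infty$-in-time conclusion.
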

\begin{proof}
The result for the case $\delta=0$ is classical and follows from elliptic regularity.
We refer the reader to \cite[chapter 9]{Brezis} for details.
We just give main arguments and derive bounds for $S^{\varepsilon}$. 
By the elliptic regularity, we know that $S^{\varepsilon}(\cdot,t)$ is 
bounded in $W^{2,q}(\mathbb{R}^d)$ and the following estimate holds.
\begin{equation*}
 \|S^{\varepsilon}(\cdot,t)\|_{W^{2,q}(\mathbb{R}^d)}\leq C\,\|(\rho_1^{\varepsilon}+\rho_2^{\varepsilon})(\cdot,t) \|_{L^q(\mathbb{R}^d)}.
\end{equation*}
By the Morrey theorem, $W^{2,q}$ is continuously embedded into $C^{1,\gamma}$ with $\gamma=1-\frac{d}{q}$.
\begin{equation*}
 W^{2,q}(\mathbb{R}^d)\hookrightarrow C^{1,\gamma}(\mathbb{R}^d) .
\end{equation*}
The interpolation between $L^1$ and $L^q$ implies that for all $d \leq p \leq q$,
$(\rho_1^{\varepsilon}+\rho_2^{\varepsilon})(\cdot,t)$ are bounded in $L^p$ and by elliptic regularity $S^{\varepsilon}(\cdot,t)$ is bounded in $W^{2,p}$. 
Thus, the $W^{2,p}$-norm of $S^{\varepsilon}$ is bounded by $L^p$-norms of $\rho_1^{\varepsilon}$ and $\rho_2^{\varepsilon}$.
\begin{equation*}
  \|S^{\varepsilon}(\cdot,t)\|_{W^{2,p}(\mathbb{R}^d)}\leq 
C \,\|(\rho_1^{\varepsilon}+\rho_2^{\varepsilon})(\cdot,t) \|_{L^p(\mathbb{R}^d)}.
\end{equation*}
By the Morrey Theorem, we conclude that for all $0<\alpha<1-\frac{d}{q}$, $S(\cdot,t)$ belongs to $C^{1,\alpha}$ and we have the estimate
\begin{equation*}
 \|S^{\varepsilon}(\cdot,t)\|_{C^{1,\alpha}(\mathbb{R}^d)} \leq 
C \, \|S^{\varepsilon}(\cdot,t)\|_{W^{2,p}(\mathbb{R}^d)},
\end{equation*}
where $p$ is given by $1-\frac{d}{p}=\alpha$.
\medskip\\
We now deal with the case $\delta=1$. 
We recall that $S$ is given by the convolution with the kernel $K$ defined in \eqref{def_Kernel}.
Applying the Young inequality and using Lemma~\ref{properties_K} yields for all $t$ in $[0,\tau]$
\begin{equation*}
 \begin{aligned}
  &\|S^{\varepsilon}(\cdot,t)\|_{L^{\infty}(\mathbb{R}^d)}\leq \left(\int_0^{\tau}\|K(\cdot,s)\|_{L^{q'}(\mathbb{R}^d)}\,ds\right) \|\rho_1^{\varepsilon}+\rho_2^{\varepsilon}\|_{L^{\infty}([0,\tau],L^q(\mathbb{R}^d))},\\
  &\|\nabla S^{\varepsilon}(\cdot,t)\|_{L^{\infty}(\mathbb{R}^d)}\leq \left(\int_0^{\tau}\|\nabla K(\cdot,s)\|_{L^{q'}(\mathbb{R}^d)}\,ds\right) \|\rho_1^{\varepsilon}+\rho_2^{\varepsilon}\|_{L^{\infty}([0,\tau],L^q(\mathbb{R}^d))}.
 \end{aligned}
\end{equation*}
Similarly, we have
\begin{equation*}
 \begin{aligned}
  &\|S^{\varepsilon}(\cdot,t)\|_{L^1(\mathbb{R}^d)}\leq \left(\int_0^{\tau}\|K(\cdot,s)\|_{L^1(\mathbb{R}^d)}\,ds\right) \|\rho_1^{\varepsilon}+\rho_2^{\varepsilon}\|_{L^{\infty}([0,\tau],L^1(\mathbb{R}^d))},\\  
  & \|\nabla S^{\varepsilon}(\cdot,t)\|_{L^1(\mathbb{R}^d)}\leq \left(\int_0^{\tau}\|\nabla K(\cdot,s)\|_{L^1(\mathbb{R}^d)}\,ds\right) \|\rho_1^{\varepsilon}+\rho_2^{\varepsilon}\|_{L^{\infty}([0,\tau],L^1(\mathbb{R}^d))}.
 \end{aligned}
\end{equation*}
By an interpolation argument, we deduce that $S^{\varepsilon}$ is bounded 
in $L^{\infty}([0,\tau],W^{1,p}(\mathbb{R}^d))$ for any $p$ 
between $1$ and $\infty$.
Thanks to the Morrey theorem, $S^{\varepsilon}$ belongs to 
$L^{\infty}([0,\tau],C^{0,\alpha}(\mathbb{R}^d))$ with 
$0<\alpha \leq 1-\frac{d}{q}$.

\medskip
Fix $x$ and $x'$ in $\mathbb{R}^d$ and set $r:=\abs{x-x'}$.
For the sake of simplicity, we define  $\rho^{\varepsilon}:=\rho_1^{\varepsilon}+\rho_2^{\varepsilon}$.
Let us prove that $\nabla S^{\varepsilon}$ is bounded in $L^{\infty}([0,\tau],C^{0,\alpha}(\mathbb{R}^d))$.
We split the integral into two parts $I_1$ and $I_2$.
\begin{equation*}
 \begin{aligned}
  \nabla S^{\varepsilon}(x,t)-\nabla S^{\varepsilon}(x',t)&=\int_0^t \int_{\mathbb{R}^d} \big(\nabla K(x-y,t-s)-\nabla K(x'-y,t-s)\big)(\rho_1^{\varepsilon}+\rho_2^{\varepsilon})(y,s) \,dy\,ds\\
  \phantom{\nabla S^{\varepsilon}(x,t)-\nabla S^{\varepsilon}(x',t)}& =I_1+I_2,
 \end{aligned}
\end{equation*}
with
\begin{equation*}
 \begin{aligned}
  I_1:&=\int_0^t \int_{B_x(2r)}\big(\nabla K(x-y,t-s)-\nabla K(x'-y,t-s)\big)(\rho_1^{\varepsilon}+\rho_2^{\varepsilon})(y,s)\,dy\,ds,\\
  I_2:&=\int_0^t \int_{\mathbb{R}^d \setminus B_x(2r)}\big(\nabla K(x-y,t-s)-\nabla K(x'-y,t-s)\big)(\rho_1^{\varepsilon}+\rho_2^{\varepsilon})(y,s)\,dy\,ds.
 \end{aligned}
\end{equation*}
 We now estimate $I_1$.
Using the definition of $\nabla K$ and the triangle inequality, we get
\begin{equation*}
 \begin{aligned}
   I_1\leq & \frac{1}{2(4\pi)^{d/2}}\int_0^t\!\!\!\int_{B_x(2r)}e^{-\frac{\abs{x-y}^2}{4(t-s)}-(t-s)}\frac{\abs{x-y}}{(t-s)^{d/2+1}}\rho^{\varepsilon}(y,s)\,dy\,ds \\
  \phantom{I_1= } & + \frac{1}{2(4\pi)^{d/2}} \int_0^t\!\!\!\int_{B_x(2r)} e^{-\frac{\abs{x'-y}^2}{4(t-s)}-(t-s)}\frac{\abs{x'-y}}{(t-s)^{d/2+1}} \rho^{\varepsilon}(y,s)\,dy\,ds.
 \end{aligned}
\end{equation*}
These two terms are identical and we will be treated in the same manner. 
We just carry out computations for the first one. 
Since $y$ belongs to $B_x(2r)$, $r=|x-x'|$, we have 
\begin{multline*}
 \int_0^t\!\!\!\int_{B_x(2r)}\!\!\!e^{-\frac{\abs{x-y}^2}{4(t-s)}-(t-s)}\frac{\abs{x-y}}{(t-s)^{d/2+1}}\rho^{\varepsilon}(y,s)\,dy\,ds \leq 2^{\alpha} \abs{x-x'}^{\alpha}\\
 \times \int_0^t\!\!\!\int_{B_x(2r)}e^{-\frac{\abs{x-y}^2}{4(t-s)}-(t-s)}\frac{\abs{x-y}^{1-\alpha}}{(t-s)^{d/2+1}}\rho^{\varepsilon}(y,s)\,dy\,ds.
\end{multline*}
Therefore, $I_1$ reads
\begin{multline*}
 I_1\leq \,\frac{2^{\alpha-1}}{(4\pi)^{d/2}} \abs{x-x'}^{\alpha} \left(\int_0^t\!\!\!\int_{B_x(2r)}\!\!e^{-\frac{\abs{x-y}^2}{4(t-s)}-(t-s)}\frac{\abs{x-y}^{1-\alpha}}{(t-s)^{d/2+1}}\rho^{\varepsilon}(y,s)\,dy\,ds \right.\\
\left. +\int_0^t\!\!\!\int_{B_x(2r)}\!\!e^{-\frac{\abs{x'-y}^2}{4(t-s)}-(t-s)} \frac{\abs{x'-y}^{1-\alpha}}{(t-s)^{d/2+1}}\rho^{\varepsilon}(y,s)\,dy\,ds \right).
\end{multline*}
We conclude by bounding the integral over the ball $B_x(2r)$ by the integral over the whole space.
We now deal with $I_2$. We first compute the hessian $D^2_x K$ of the kernel $K$.
\begin{equation*}
 D^2_x K(x,t)=\frac{1}{(4\pi t)^{d/2}}(-\frac{I_d}{t}+\frac{x\otimes x}{t^2})e^{-\frac{\abs{x}^2}{4t}-t}.
\end{equation*}
We decompose $D^2_x K$ into two parts~:
\begin{equation*}
 D^2_x K(x,t)=\frac{1}{(4\pi)^{d/2}}\left(-I_d+\frac{x}{\sqrt{t}}\otimes \frac{x}{\sqrt{t}}\right)e^{-\frac{1}{8} \abs{\frac{x}{\sqrt{t}}}^2} \times \frac{e^{-\frac{\abs{x}^2}{8t}-t}}{t^{d/2+1}}.
\end{equation*}
By using the change of variable $u=\frac{x}{\sqrt{t}}$, we notice that the first part of the rhs 
is bounded by a nonnegative constant $C$. Thus, 
\begin{equation*}
 \abs{D^2_x K}(x,t) \leq \frac{C}{t^{d/2+1}} e^{-\frac{\abs{x}^2}{8t}-t}.
\end{equation*}
Moreover, we remark that for all $y$ in $\mathbb{R}^d \setminus B_x(2r)$ and $u$ in $[0,1]$
\begin{equation*}
 \abs{x-y+u(x'-x)}\geq \abs{x-y}-u\abs{x-x'} \geq (1-u/2)\abs{x-y}.
\end{equation*}
We infer that for all $y$ in $\mathbb{R}^d \setminus B_x(2r)$ and $u$ in $[0,1]$
\begin{equation*}
 \abs{D^2_x K}(x-y+u(x'-x),t-s) \leq \frac{C}{(t-s)^{d/2+1}} \exp \left(-\frac{\abs{x-y}^2}{32(t-s)}-(t-s)\right).
\end{equation*}
Hence, 
\begin{equation*}
 I_2 \leq  C \abs{x-x'} \int_0^t\!\!\!\int_{\mathbb{R}^d\setminus B_x(2r) } \frac{1}{(t-s)^{d/2+1}}\exp\left(-\frac{\abs{x-y}^2}{32(t-s)}-(t-s)\right)\rho^{\varepsilon}(y,s) \,dy\,ds.
\end{equation*}
We deduce that 
\begin{equation*}
\begin{aligned}
 &I_2 \leq C'\abs{x-x'}^{\alpha} \int_0^t\!\!\!\int_{\mathbb{R}^d \setminus B_x(2r) } \frac{\abs{x-y}^{1-\alpha}}{(t-s)^{d/2+1}} \exp\left(-\frac{\abs{x-y}^2}{32(t-s)}-(t-s)\right) \rho^{\varepsilon}(y,s) \,dy\,ds,\\
 &\phantom{I_2}\leq C'\abs{x-x'}^{\alpha} \int_0^t\!\!\!\int_{\mathbb{R}^d} \frac{\abs{x-y}^{1-\alpha}}{(t-s)^{d/2+1}} \exp\left(-\frac{\abs{x-y}^2}{32(t-s)}-(t-s)\right)\rho^{\varepsilon}(y,s) \,dy\,ds.
\end{aligned}
\end{equation*}
Then using Young's inequality, we have for any positive constant $c$  and any $p\in (d,q)$,
\begin{equation*}
  \int_0^t \int_{\mathbb{R}^d} \frac{\abs{x-y}^{1-\alpha}}{(t-s)^{d/2+1}} e^ {-\frac{c\abs{x-y}^2}{t-s}-(t-s)} \rho^{\varepsilon}(y,s) dy ds \leq 
  \int_0^t \frac{e^{-(t-s)}}{(t-s)^{d/2+1}}\|\abs{y}^{1-\alpha} e^{-\frac{c\abs{y}^2}{t-s}}\|_{L^{p'}(\mathbb{R}^d)} \|\rho^{\varepsilon}\|_{L^{p}(\mathbb{R}^d)} ds.
\end{equation*}
It is clear that
\begin{equation*}
 \|y\mapsto \abs{y}^{1-\alpha} \exp\Big(-\frac{c\abs{y}^2}{t-s}\big)\|_{L^{p'}(\mathbb{R}^d)} \leq C(t-s)^{\frac{1-\alpha}{2}+\frac{d}{2p'}}.
\end{equation*}
This implies that for all $\alpha <1-\frac{d}{q}$ and all $d<p<q$
\begin{equation*}
 \int_0^t \int_{\mathbb{R}^d} \frac{\abs{x-y}^{1-\alpha}}{(t-s)^{d/2+1}} e ^{-\frac{c\abs{x-y}^2}{t-s}-(t-s)} \rho^{\varepsilon}(y,s)\,dy\,ds 
\leq C \left(\int_0^{\tau} t^{\frac{1-\alpha}{2}-\frac{d}{2p'}}e^{-t} dt \right)\|\rho^{\varepsilon}\|_{L^{\infty}([0,\tau],L^{p}(\mathbb{R}^d))}<\infty.
\end{equation*}
So, the $C^{1,\alpha}$ bound of $S^{\varepsilon}$ follows.
\end{proof}

\begin{lemma}\label{ineq}
Fix $S\geq 0$ and let $f$ be a velocity distribution and $q>1$.
Then, we have  
\begin{multline}\label{eq_min_sq}
\abs{\phi_i^{A,\varepsilon}[S](f+f')(f^{q-1}-(f')^{q-1})}\leq \frac{1}{2} \phi_i^{S,\varepsilon}[S](f-f')(f^{q-1}-(f')^{q-1})\\
+ \frac{\phi_i^{A,\varepsilon}[S]^2}{2\phi_i^{S,\varepsilon}[S^{\varepsilon}]} \frac{(f+f')^2 (f^{q-1}-(f')^{q-1})}{f-f'},
 \end{multline}
where $\phi_i^{S,\varepsilon}[S],\phi_i^{A,\varepsilon}[S]$ are the symmetric and anti-symmetric parts of $\mathcal{T}_i^{\varepsilon}[S]$ defined in \eqref{decomp_T}.
There also exists a constant $c_q$ depending on $q$ such that
\begin{equation}\label{eq_conv}
  \frac{(f+f')^2(f^{q-1}-(f')^{q-1})}{f-f'}\leq c_q(f^{q}+(f')^{q}).
\end{equation}
\end{lemma}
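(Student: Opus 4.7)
The plan for inequality \eqref{eq_min_sq} is a single application of Young's inequality $ab\leq \tfrac12 a^2+\tfrac12 b^2$, applied to a careful factorisation of the left-hand side. Since $q>1$, the map $x\mapsto x^{q-1}$ is non-decreasing on $[0,\infty)$, so both $(f-f')(f^{q-1}-(f')^{q-1})$ and the ratio $(f^{q-1}-(f')^{q-1})/(f-f')$ are non-negative (the ratio being extended by continuity to $(q-1)f^{q-2}$ when $f=f'$). This gives the identity
\begin{equation*}
|f^{q-1}-(f')^{q-1}| \;=\; \sqrt{(f-f')(f^{q-1}-(f')^{q-1})}\;\sqrt{\tfrac{f^{q-1}-(f')^{q-1}}{f-f'}}.
\end{equation*}
I would then set
\begin{equation*}
a := \sqrt{\phi_i^{S,\varepsilon}[S]\,(f-f')(f^{q-1}-(f')^{q-1})},\qquad b := \tfrac{|\phi_i^{A,\varepsilon}[S]|}{\sqrt{\phi_i^{S,\varepsilon}[S]}}\,(f+f')\sqrt{\tfrac{f^{q-1}-(f')^{q-1}}{f-f'}},
\end{equation*}
observe that $ab=|\phi_i^{A,\varepsilon}[S](f+f')(f^{q-1}-(f')^{q-1})|$, and note that $\phi_i^{S,\varepsilon}[S]>0$ by \eqref{ineq_phi}, which justifies dividing by it. Young's inequality then produces precisely the right-hand side of \eqref{eq_min_sq}; the degenerate case $f=f'$ is trivial since both sides vanish.

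For \eqref{eq_conv}, by symmetry I may assume $f\geq f'\geq 0$. The key step is the algebraic decomposition $(f+f')^2=(f-f')^2+4ff'$, which yields
\begin{equation*}
\frac{(f+f')^2(f^{q-1}-(f')^{q-1})}{f-f'} \;=\; (f-f')(f^{q-1}-(f')^{q-1}) \;+\; 4ff'\,\frac{f^{q-1}-(f')^{q-1}}{f-f'}.
\end{equation*}
The first term is immediately bounded by $f^q\leq f^q+(f')^q$, using $f-f'\leq f$ and $f^{q-1}-(f')^{q-1}\leq f^{q-1}$. For the second term, the mean value theorem applied to $x\mapsto x^{q-1}$ gives $(f^{q-1}-(f')^{q-1})/(f-f')=(q-1)\xi^{q-2}$ for some $\xi\in[f',f]$. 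I would then split on $q$: if $q\geq 2$ then $\xi^{q-2}\leq f^{q-2}$, yielding the bound $4(q-1)f^{q-1}f'$; if $1<q<2$ then $\xi^{q-2}\leq (f')^{q-2}$ (when $f'=0$ the product $4ff'\xi^{q-2}$ vanishes), yielding $4(q-1)f(f')^{q-1}$. In either case, Young's inequality with conjugate exponents $q$ and $q/(q-1)$ absorbs the mixed term into $c_q(f^q+(f')^q)$ for a suitable constant $c_q$ depending only on $q$.

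Neither part presents a serious obstacle: both rely only on Young's inequality and elementary monotonicity of $x\mapsto x^{q-1}$. The main care points are (i) ensuring $\phi_i^{S,\varepsilon}[S]>0$ before dividing by it in the first inequality, which is guaranteed by hypothesis (H1) and the lower bound in \eqref{ineq_phi}, and (ii) splitting into the two ranges $q\geq 2$ and $1<q<2$ in the second inequality so as to bound the intermediate value $\xi^{q-2}$ in the correct direction; the case $f=f'$ (or $f'=0$ in the sub-quadratic range) must be treated by continuity but does not cause any genuine difficulty.
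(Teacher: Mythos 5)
Your proposal is correct. For \eqref{eq_min_sq} it coincides with the paper's argument: the paper expands the square $\bigl(\phi_i^{S,\varepsilon}[S](f-f')\pm\phi_i^{A,\varepsilon}[S](f+f')\bigr)^2\geq 0$ and then multiplies through by the nonnegative ratio $(f^{q-1}-(f')^{q-1})/(f-f')$, which is exactly your Young's inequality $ab\leq\tfrac12 a^2+\tfrac12 b^2$ with the square roots of that ratio distributed between $a$ and $b$; in both versions the positivity of $\phi_i^{S,\varepsilon}$ from (H1) and \eqref{ineq_phi} is what licenses the division. For \eqref{eq_conv}, however, you take a genuinely different route. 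The paper exploits homogeneity of degree $q$: writing $a=f'/f$, the quotient of the left-hand side by $f^q+(f')^q$ becomes the single-variable function $A(a)=\frac{(1+a)^2(1-a^{q-1})}{(1-a)(1+a^q)}$, which extends continuously to $[0,\infty)$ (with value $2(q-1)$ at $a=1$), is positive, and tends to $1$ at infinity, hence is bounded; $c_q$ is taken to be its supremum. Your decomposition $(f+f')^2=(f-f')^2+4ff'$ followed by the mean value theorem, the case split $q\geq 2$ versus $1<q<2$, and Young's inequality with exponents $q$ and $q/(q-1)$ is more elementary and has the advantage of producing an explicit value of $c_q$, at the cost of the case analysis and the boundary checks at $f=f'$ and $f'=0$ (which you handle correctly). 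The paper's argument is shorter and uniform in $q$ but only yields $c_q$ non-constructively as a supremum. Either proof is acceptable.
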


\begin{proof}
For \eqref{eq_min_sq}, we observe that
 \begin{equation*}
  \left(\phi_i^{S,\varepsilon}[S](f-f')\pm \phi_i^{A,\varepsilon}[S](f+f')\right)^2\geq 0.
 \end{equation*}
Expanding this inequality and dividing both sides by $\displaystyle (f^{q-1}-(f')^{q-1})/(f-f')$ gives the result.\\
To prove \eqref{eq_conv}, we show that for $a\geq 0$, the following function $A$ is bounded
\begin{equation*}
 A: a\longmapsto \frac{(1+a)^2(1-a^{q-1})}{(1-a)(1+a^q)}.
\end{equation*}
In fact, we have $\lim_{a\to 1} A(a)=2(q-1)>0$. 
Then $A$ can be continuously extended to $[0,\infty[$. 
Since $A$ tends to $1$ at infinity and is positive, 
$A$ is bounded by a positive constant.
\end{proof}
\begin{proposition}\label{uniform_estimate}
Assume that the initial data $f_1^{ini}$ and $f_2^{ini}$ belong to $L^1_{+}(\Omega) \cap L^{\infty}(\Omega)$.
Under Assumption (H1), the solution of \eqref{kinetics_eps}--\eqref{eq_S_eps} admits uniform estimates in $\varepsilon$ in the following spaces~:
\begin{equation*}
 \left\{
 \begin{aligned}
  & f_1^{\varepsilon}, f_2^{\varepsilon} \in L^{\infty}_{loc}((0,\infty),L^q(\Omega)),\quad 1 \leq q<\infty,\\
  & r_1^{\varepsilon}, r_2^{\varepsilon} \in L^2(\Omega \times (0,\infty)),\\
  & S^{\varepsilon} \in L^{\infty}_{loc}((0,\infty),C^{1,\alpha}(\mathbb{R}^d) \cap W^{1,p}(\mathbb{R}^d)),\quad 1\leq p \leq \infty,\, 0<\alpha<1,
 \end{aligned}
\right.
\end{equation*}
where $r_i^{\varepsilon}$ is given by 
\begin{equation*}
r_i^{\varepsilon}:=\frac{f_i^{\varepsilon}-\rho_i^{\varepsilon}F}{\varepsilon}.
\end{equation*}
\end{proposition}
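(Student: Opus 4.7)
The plan is to derive all four uniform estimates in turn, using the dissipation structure of the symmetric part of the collision operator together with Lemma~\ref{ineq} to control the antisymmetric (drift) contribution. The $S^\eps$ bounds are then a direct consequence of Lemma~\ref{estimate_S}.

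\textbf{Step 1 ($L^1$ and mass conservation).} First I would integrate \eqref{kinetics_eps} in $v$: the collision term vanishes, so $\rho_i^\eps$ satisfies a conservation law and $\|f_i^\eps(\cdot,t)\|_{L^1(\Omega)}=\|f_i^{ini}\|_{L^1(\Omega)}$ uniformly in $\eps$.

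\textbf{Step 2 ($L^q$ bound, $1<q<\infty$).} Multiply the kinetic equation \eqref{kinetics_eps} by $q(f_i^\eps)^{q-1}$ and integrate over $\Omega$. The transport term vanishes and, using the standard symmetrization $v\leftrightarrow v'$ of the gain term, the collision contribution rewrites as
\begin{equation*}
\int_{\R^d}\!\!\int_V\!\!\int_V \tfrac{q}{2}\bigl(T_i^{*,\eps}[S^\eps]f_i^\eps-T_i^{\eps}[S^\eps]f_i'^{\eps}\bigr)\bigl((f_i^\eps)^{q-1}-(f_i'^{\eps})^{q-1}\bigr)\,dv'\,dv\,dx.
\end{equation*}
Substituting $T_i^\eps=\phi_i^{S,\eps}+\phi_i^{A,\eps}$, $T_i^{*,\eps}=\phi_i^{S,\eps}-\phi_i^{A,\eps}$ from \eqref{decomp_T} splits this into a nonnegative symmetric dissipation term $\phi_i^{S,\eps}(f-f')(f^{q-1}-(f')^{q-1})$ and an antisymmetric term $-\phi_i^{A,\eps}(f+f')(f^{q-1}-(f')^{q-1})$. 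Applying \eqref{eq_min_sq} absorbs half of the dissipation and replaces the antisymmetric term by $\tfrac{(\phi_i^{A,\eps})^2}{\phi_i^{S,\eps}}\tfrac{(f+f')^2(f^{q-1}-(f')^{q-1})}{f-f'}$, which by \eqref{eq_conv} is bounded by $c_q\tfrac{(\phi_i^{A,\eps})^2}{\phi_i^{S,\eps}}(f^q+(f')^q)$. Using \eqref{ineq_phi}, $\int_V \tfrac{(\phi_i^{A,\eps})^2}{\phi_i^{S,\eps}}\,dv'\leq C\eps^2$, so after dividing the whole identity by $\eps^2$ I obtain
\begin{equation*}
\tfrac{d}{dt}\|f_i^\eps\|_{L^q(\Omega)}^q + \tfrac{c}{\eps^2}\!\!\int\!\!\phi_i^{S,\eps}(f-f')(f^{q-1}-(f')^{q-1})\leq C_q\|f_i^\eps\|_{L^q(\Omega)}^q,
\end{equation*}
and Gronwall yields the local-in-time uniform $L^q$ bound, including the $L^\infty$ case by sending $q\to\infty$ or by the argument of Lemma~\ref{estimate}.

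\textbf{Step 3 ($L^2$ bound on $r_i^\eps$).} I specialize the previous inequality to $q=2$. Using \eqref{ineq_phi} to bound $\phi_i^{S,\eps}$ from below by a positive constant, together with the identity
\begin{equation*}
\int_V\!\!\int_V (f_i^\eps-f_i'^\eps)^2\,dv'\,dv = 2|V|\int_V\!\Bigl(f_i^\eps-\tfrac{\rho_i^\eps}{|V|}\Bigr)^{\!2}dv = 2|V|\,\eps^2\!\int_V (r_i^\eps)^2\,dv,
\end{equation*}
the dissipation term in the differential inequality is exactly $c\|r_i^\eps(\cdot,t)\|_{L^2(\Omega)}^2$. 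Integrating in time and using the $L^2$ bound obtained in Step 2 gives $r_i^\eps\in L^2_{loc}((0,\infty),L^2(\Omega))$ uniformly in $\eps$.

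\textbf{Step 4 ($S^\eps$ bounds).} Since Steps~1--2 furnish uniform $L^\infty_{loc}((0,\infty),L^1\cap L^q(\R^d))$ control of $\rho_1^\eps+\rho_2^\eps$ for any finite $q$, Lemma~\ref{estimate_S} directly yields the $C^{1,\alpha}\cap W^{1,p}$ bound on $S^\eps$ for all admissible $\alpha,p$.

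The main obstacle is Step~2: making the symmetrization rigorous and choosing the split in Lemma~\ref{ineq} so that the antisymmetric contribution can be \emph{completely} absorbed by half of the symmetric dissipation, leaving only an $O(1)$ remainder after division by $\eps^2$; the $O(\eps^2)$ size of $\int_V (\phi_i^{A,\eps})^2/\phi_i^{S,\eps}\,dv'$ ensured by (H1) is precisely what makes this work.
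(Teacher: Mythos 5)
Your proposal is correct and follows essentially the same route as the paper: $L^1$ by mass conservation, $L^q$ via multiplication by $(f_i^{\varepsilon})^{q-1}$, symmetrization in $(v,v')$, the splitting into $\phi_i^{S,\varepsilon}$ and $\phi_i^{A,\varepsilon}$ controlled through Lemma~\ref{ineq} and \eqref{ineq_phi} plus Gronwall, the $r_i^{\varepsilon}$ bound by specializing to $q=2$ and integrating the retained dissipation in time, and the $S^{\varepsilon}$ bounds from Lemma~\ref{estimate_S}. The only blemish is a sign slip in your intermediate display (the symmetrized collision term on the right-hand side should read $T_i^{\varepsilon}[S^{\varepsilon}]f_i'^{\varepsilon}-T_i^{*,\varepsilon}[S^{\varepsilon}]f_i^{\varepsilon}$, so that the symmetric part appears with a negative sign before being moved to the left as dissipation), but your final differential inequality is stated correctly, so the argument goes through unchanged.
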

The proof is the same in parabolic and elliptic cases and we carry out the proof only in the parabolic setting.

\begin{proof}
Let $\varepsilon>0$, $\tau>0$, $1\leq q <\infty$, $1<p\leq \infty$ and $0<\alpha<1$. The existence of $f_1^{\varepsilon},f_2^{\varepsilon},S^{\varepsilon}$ follows from Theorem~\ref{existence}. We remind the general conservation 
law for $\rho_i^{\varepsilon}$
\begin{equation}\label{cell_conservation_eps}
 \partial_t \rho_i^{\varepsilon}+\nabla \cdot J_i^{\varepsilon}=0,
\end{equation}
with $\displaystyle J_i^{\varepsilon}:=\frac{1}{\varepsilon} \int_V vf_i^{\varepsilon} dv$.
This gives that $f_i^{\varepsilon}(\cdot,t) \in L^1(\Omega)$ and we have
  \begin{equation*}
  \|f_i^{\varepsilon}(\cdot,t)\|_{L^1(\Omega)}=  \|f_i^{ini}\|_{L^1(\Omega)} .
 \end{equation*}
We suppose that $q>1$ and now prove the uniform boundness of $f_i^{\varepsilon}(\cdot,\tau)$ in $L^q(\Omega)$.
Multiplying equation for $f_i^{\varepsilon}$ \eqref{kinetics_eps} by $(f_i^{\varepsilon})^{q-1}$ and  integrating over $V$ and $\mathbb{R}^d$ gives 
\begin{equation}\label{eq_evfp}
  \frac{1}{q}\frac{d}{dt}\int_{\mathbb{R}^d}\!\!\int_V (f_i^{\varepsilon})^q \,dv\,dx=-\frac{1}{\varepsilon^2}\int_{\mathbb{R}^d}\!\!\int_V \mathcal{T}_i^{\varepsilon}[S^{\varepsilon}](f_i^{\varepsilon}) (f_i^{\varepsilon})^{q-1}\,dv\,dx.
\end{equation}
Let $\chi$ and $g$ be two real-valued functions. From the decomposition of $\mathcal{T}_i^{\varepsilon}[S]$ in its symmetric and anti-symmetric parts, we have
\begin{multline*}
 \int_V \mathcal{T}_i^{\varepsilon}[S](g)\chi(g)\,dv=\frac{1}{2}\int_V\!\!\int_V \phi_i^{S,\varepsilon}[S](g-g')(\chi(g)-\chi(g'))\,dv'\,dv\\
  +\frac{1}{2}\int_V\!\!\int_V \phi_i^{A,\varepsilon}[S](g+g')(\chi(g)-\chi(g'))\,dv'\,dv.
\end{multline*}
When applied to $g=f_i^{\varepsilon}$ and $\chi=x^{q-1}$, we get from \eqref{eq_evfp}
\begin{multline}\label{entropy_ineq}
 \frac{1}{q}\frac{d}{dt}\int_{\mathbb{R}^d}\!\!\int_V (f_i^{\varepsilon})^q \,dv\,dx+\frac{1}{2\varepsilon^2}\int_{\mathbb{R}^d}\!\!\int_V\!\!\int_V \phi_i^{S,\varepsilon}[S^{\varepsilon}](f_i^{\varepsilon}-(f_i^{\varepsilon})^{'})((f_i^{\varepsilon})^{q-1}-((f_i^{\varepsilon})^{'})^{q-1})\,dv'\,dv\,dx\\
  =-\frac{1}{2\varepsilon^2}\int_{\mathbb{R}^d}\!\!\int_V\!\!\int_V \phi_i^{A,\varepsilon}[S^{\varepsilon}](f_i^{\varepsilon}+(f_i^{\varepsilon})^{'})((f_i^{\varepsilon})^{q-1}-((f_i^{\varepsilon})^{'})^{q-1})\,dv'\,dv\,dx.
\end{multline}
We remark that the term depending on $\phi_i^{S,\varepsilon}[S]$ is positive whereas the sign of the other term is unknown. However, Lemma~\ref{ineq} allows us to bound it by two terms whose signs are known.

In fact, combining \eqref{eq_min_sq} and \eqref{eq_conv} gives
\begin{multline*}
  \abs{\phi_i^{A,\varepsilon}[S^{\varepsilon}](f_i^{\varepsilon}+(f_i^{\varepsilon})^{'})((f_i^{\varepsilon})^{q-1}-((f_i^{\varepsilon})^{'})^{q-1})} \leq \frac{1}{2} \phi_i^{S,\varepsilon}[S^{\varepsilon}](f_i^{\varepsilon}-(f_i^{\varepsilon})^{'})((f_i^{\varepsilon})^{q-1}-((f_i^{\varepsilon})^{'})^{q-1})\\
  + c_q\frac{\phi_i^{A,\varepsilon}[S^{\varepsilon}]^2}{2\phi_i^{S,\varepsilon}[S^{\varepsilon}]} ((f_i^{\varepsilon})^{q}+((f_i^{\varepsilon})^{'})^{q}).
\end{multline*}
Using this inequality in \eqref{entropy_ineq}, we get
 \begin{multline}\label{entropy_ineq_bis}
\frac{1}{q}\frac{d}{dt}\int_{\mathbb{R}^d}\!\!\int_V (f_i^{\varepsilon})^q \,dv\,dx+\frac{1}{4\varepsilon^2}\int_{\mathbb{R}^d}\!\!\int_V\!\!\int_V \phi_i^{S,\varepsilon}[S^{\varepsilon}](f_i^{\varepsilon}-(f_i^{\varepsilon})^{'})((f_i^{\varepsilon})^{q-1}-((f_i^{\varepsilon})^{'})^{q-1})\,dv'\,dv\,dx\\
  \leq\frac{c_q}{2\varepsilon^2}\int_{\mathbb{R}^d}\!\!\int_V\!\!\int_V \frac{\phi_i^{A,\varepsilon}[S^{\varepsilon}]^2}{2\phi_i^{S,\varepsilon}[S^{\varepsilon}]} ((f_i^{\varepsilon})^{q}+((f_i^{\varepsilon})^{'})^{q})\,dv\,dv'\,dx.
\end{multline}
We deduce that
\begin{equation*}
\frac{1}{q}\frac{d}{dt}\int_{\mathbb{R}^d}\!\!\int_V (f_i^{\varepsilon})^q\,dv\,dx \leq\frac{c_q}{2\varepsilon^2}\int_{\mathbb{R}^d}\!\!\int_V\!\!\int_V \frac{\phi_i^{A,\varepsilon}[S^{\varepsilon}]^2}{2\phi_i^{S,\varepsilon}[S^{\varepsilon}]} ((f_i^{\varepsilon})^{q}+((f_i^{\varepsilon})^{'})^{q})\,dv\,dv'\,dx.
\end{equation*}
Applying the Fubini theorem to the right-hand side and using \eqref{ineq_phi}, one gets 
\begin{equation}\label{estimate_f}
 \frac{d}{dt}\int_{\mathbb{R}^d}\!\!\int_V (f_i^{\varepsilon})^q\,dv\,dx \leq \frac{c_q q}{2}\psi_i \abs{V} \|\theta_i\|_{L^{\infty}(\mathbb{R})}^2 \int_{\mathbb{R}^d}\!\!\int_V (f_i^{\varepsilon})^q \,dv\,dx.
\end{equation}
Applying the Gronwall lemma, we conclude that for any $\tau>0$ and any $q\geq 1$,
$f_i^{\varepsilon}$ is uniformly bounded in $L^{\infty}([0,\tau],L^q(\Omega))$,
$i=1,2$.

We now prove the uniform boundness of $r_i^{\varepsilon}$ in $L^2\big(\Omega \times [0,\tau]\big)$.
Applying \eqref{entropy_ineq_bis} to $q=2$ and integrating over $t$ in $(0,\tau)$, we obtain
\begin{multline*}
 \int_0^{\tau}\!\!\int_{\mathbb{R}^d}\!\!\int_V\!\!\int_V \phi_i^{S,\varepsilon}[S^{\varepsilon}]\big(f_i^{\varepsilon}-(f_i^{\varepsilon})^{'}\big)^2 \,dv'\,dv \,dx \,dt \leq 2\varepsilon^2 \int_{\mathbb{R}^d}\!\!\int_V \left((f_i^{ini})^2 -(f_i^{\varepsilon})^2(x,v,\tau)\right) \,dv\,dx\\
  +2 c_2  \int_0^{\tau}\!\!\int_{\mathbb{R}^d}\!\!\int_V\!\!\int_V \frac{\phi_i^{A,\varepsilon}[S^{\varepsilon}]^2}{2\phi_i^{S,\varepsilon}[S^{\varepsilon}]} ((f_i^{\varepsilon})^{2}+((f_i^{\varepsilon})^{'})^{2}) \,dv\,dv'\,dx\,dt.
\end{multline*}
By the symmetry of $\displaystyle \frac{\phi_i^{A,\varepsilon}[S^{\varepsilon}]^2}{2\phi_i^{S,\varepsilon}[S^{\varepsilon}]}$, we have
\begin{multline*}
 \int_0^{\tau}\!\!\int_{\mathbb{R}^d}\!\!\int_V\!\!\int_V \phi_i^{S,\varepsilon}[S^{\varepsilon}](f_i^{\varepsilon}-(f_i^{\varepsilon})^{'})^2 \,dv'\,dv \,dx \,dt\leq 2\varepsilon^2\int_{\mathbb{R}^d}\!\!\int_V (f_i^{ini})^2 \,dv\,dx\\
 +2 c_2\int_0^{\tau}\!\!\int_{\mathbb{R}^d}\!\!\int_V\!\!\int_V \frac{\phi_i^{A,\varepsilon}[S^{\varepsilon}]^2}{\phi_i^{S,\varepsilon}[S^{\varepsilon}]} (f_i^{\varepsilon})^{2} \,dv\,dv'\,dx\,dt.
\end{multline*}
Since $f_i^{\varepsilon}$ is uniformly bounded in $L^{\infty}([0,\tau],L^2(\Omega))$, then from the Fubini Theorem and \eqref{ineq_phi} we have
\begin{equation*}
\int_0^{\tau}\!\!\int_{\mathbb{R}^d}\!\!\int_V\!\!\int_V \phi_i^{S,\varepsilon}[S^{\varepsilon}](f_i^{\varepsilon}-(f_i^{\varepsilon})^{'})^2 \,dv'\,dv\,dx\,dt \leq c\, \varepsilon^2.
\end{equation*}
From \eqref{ineq_phi}, we have
\begin{equation*}
\psi_i \int_0^{\tau}\!\!\int_{\mathbb{R}^d}\!\!\int_V\!\!\int_V (f_i^{\varepsilon}-(f_i^{\varepsilon})^{'})^2 \,dv'\,dv \,dx \,dt \leq c\,\varepsilon^2.
\end{equation*}
We rewrite this inequality in terms of $r_i^{\varepsilon}$ and obtain
\begin{equation*}
\int_0^{\tau}\!\!\int_{\mathbb{R}^d}\!\!\int_V\!\!\int_V \left(r_i^{\varepsilon}-(r_i^{\varepsilon})^{'}\right)^2 \,dv'\,dv \,dx \,dt\leq \frac{c}{\psi_i}.
\end{equation*}
Expanding the left-hand side and using $\displaystyle \int_V r_i^{\varepsilon}dv=0$ leads to
\begin{equation*}
 \int_0^{\tau}\!\!\!\int_{\mathbb{R}^d}\!\!\int_V (r_i^{\varepsilon})^2 \,dv\,dx\,dt
  \leq \frac{c}{2\psi_i \abs{V}}.
\end{equation*}
We finish with uniform estimates on $S^{\varepsilon}$. Applying Lemma ~\ref{estimate_S} for a $q>d$, it follows that
\begin{multline*}
  \|S^{\varepsilon}\|_{L^{\infty}([0,\tau],W^{1,p}(\mathbb{R}^d))}+\|S^{\varepsilon}\|_{L^{\infty}([0,\tau],C^{1,\alpha}(\mathbb{R}^d))}\leq \\
 c_\tau \big(\|\rho_1^{\varepsilon}\|_{L^{\infty}([0,\tau],L^1(\mathbb{R}^d))}+\|\rho_2^{\varepsilon}\|_{L^{\infty}([0,\tau],L^1(\mathbb{R}^d))}
 + \|\rho_1^{\varepsilon}\|_{L^{\infty}([0,\tau],L^q(\mathbb{R}^d))}+\|\rho_2^{\varepsilon}\|_{L^{\infty}([0,\tau],L^q(\mathbb{R}^d))}\big).
\end{multline*}
We know that $f_i^{\varepsilon}$ is uniformly bounded in $L^{\infty}([0,\tau],L^q(\Omega))$. Using the total mass conservation, we deduce
that $S^{\varepsilon}$ is uniformly bounded in $L^{\infty}([0,\tau],C^{1,\alpha}(\mathbb{R}^d)\cap W^{1,p}(\mathbb{R}^d))$ for any $p$ and $\alpha$ verifying $1\leq p \leq \infty$ and $0<\alpha<1$.
\end{proof}

\subsection{Proof of Theorem \ref{limit}}
We prove the theorem in the parabolic setting $\delta =1$. 
The proof in the elliptic one is analogous.

We first recall the following Aubin-Lions-Simon compactness lemma.
\begin{lemma}[\textbf{Aubin-Lions-Simon}, \cite{JS}]\label{Aubin_Lions}
Let $\tau>0$ and $p,q$ be such that $1<p\leq \infty,1\leq q \leq \infty$. Let $V,E,F$ be Banach spaces such that 
\begin{equation*}
 V\overset{c}\hookrightarrow E \hookrightarrow F.
\end{equation*}
If $A$ is bounded in $W^{1,p}((0,\tau),F) \cap L^q((0,\tau),V)$, then $A$ is relatively compact in $L^q((0,\tau),E)$.
\end{lemma}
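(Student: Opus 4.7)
The plan is to verify the Riesz-Fréchet-Kolmogorov compactness criterion for $A$ in $L^q((0,\tau),E)$ when $q<\infty$, and to upgrade to an Arzelà-Ascoli argument in $C([0,\tau],E)\hookrightarrow L^\infty((0,\tau),E)$ when $q=\infty$. The hypotheses control $A$ in $V$-norm along the time variable (useful for pointwise bounds) and via its time derivative in $F$-norm (useful for translation increments), whereas the compactness must be quantified in the intermediate space $E$. The bridge between the two is \emph{Ehrling's inequality}: for every $\eta>0$ there exists $C_\eta>0$ such that
\begin{equation*}
\|u\|_E \leq \eta\,\|u\|_V + C_\eta\,\|u\|_F, \qquad \forall\,u\in V.
\end{equation*}
I would prove this by contradiction: if it failed for some $\eta_0$, one could normalise a counterexample sequence so that $\|u_n\|_E=1$, $\|u_n\|_V\leq 1/\eta_0$ and $\|u_n\|_F\to 0$; the compact embedding $V\overset{c}\hookrightarrow E$ then extracts an $E$-convergent subsequence whose limit, by the continuous embedding $E\hookrightarrow F$, must vanish, contradicting $\|u_n\|_E=1$.

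Next I would establish equi-continuity in time. For $u\in A$ and $0<h<\tau$, the Bochner identity $u(t+h)-u(t)=\int_t^{t+h}u'(s)\,ds$ in $F$ and Hölder's inequality give the pointwise-in-$t$ bound
\begin{equation*}
\|u(t+h)-u(t)\|_F \,\leq\, h^{1-1/p}\,\|u'\|_{L^p((0,\tau),F)}.
\end{equation*}
Applying Ehrling to the increment $u(t+h)-u(t)\in V$ and then taking the $L^q$-norm in $t\in(0,\tau-h)$ yields, after bounding $\|u(\cdot+h)\|_{L^q((0,\tau-h),V)}\leq\|u\|_{L^q((0,\tau),V)}$,
\begin{equation*}
\|u(\cdot+h)-u(\cdot)\|_{L^q((0,\tau-h),E)} \,\leq\, C\eta\,\|u\|_{L^q((0,\tau),V)} \,+\, C\,C_\eta\,\tau^{1/q}\,h^{1-1/p}\,\|u'\|_{L^p((0,\tau),F)}.
\end{equation*}
Using the uniform bounds of $A$ in $L^q((0,\tau),V)$ and $W^{1,p}((0,\tau),F)$, one first chooses $\eta$ arbitrarily small and then $h$ small accordingly, making the right-hand side uniformly small on $A$. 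Combined with the equi-boundedness of $A$ in $L^q((0,\tau),E)$, which follows directly from the continuous embedding $V\hookrightarrow E$ and the $L^q((0,\tau),V)$-bound, the Riesz-Fréchet-Kolmogorov theorem settles the case $1\leq q<\infty$.

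The main obstacle is the endpoint $q=\infty$, where Fréchet-Kolmogorov does not apply. The remedy is that the $L^\infty((0,\tau),V)$-bound produces, via $V\overset{c}\hookrightarrow E$, pointwise-in-$t$ relative compactness of $\{u(t):u\in A,\,t\in[0,\tau]\}$ in $E$; meanwhile the Ehrling-plus-derivative argument read pointwise yields uniform equi-continuity of $t\mapsto u(t)$ in $E$ over $u\in A$. The continuous embedding $W^{1,p}((0,\tau),F)\hookrightarrow C([0,\tau],F)$ (valid for $p>1$) ensures that the null-set modifications needed to pass from a.e.\ statements to genuine continuity are classical, so Arzelà-Ascoli concludes relative compactness in $C([0,\tau],E)$ and hence in $L^\infty((0,\tau),E)$. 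The boundary exponent $p=\infty$ introduces no extra difficulty: the factor $h^{1-1/p}$ merely becomes $h$, and the Hölder step is replaced by the trivial bound $\|u(t+h)-u(t)\|_F\leq h\,\|u'\|_{L^\infty((0,\tau),F)}$.
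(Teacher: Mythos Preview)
The paper does not prove this lemma; it merely recalls it with a citation to Simon's article. Your sketch is therefore extra content, and it follows the standard route (Ehrling's inequality plus control of time translations), but there is one genuine gap in the case $1\le q<\infty$.

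You conclude by invoking ``the Riesz--Fr\'echet--Kolmogorov theorem'' from (a) equi-boundedness of $A$ in $L^q((0,\tau),E)$ and (b) uniform smallness of translation increments in $L^q((0,\tau),E)$. When the target space $E$ is infinite-dimensional, conditions (a) and (b) alone are \emph{not} sufficient for relative compactness: take $A=\{t\mapsto e_n : n\in\mathbb N\}$ with $(e_n)$ an orthonormal sequence in a Hilbert space; this set is bounded, has identically zero translation increments, yet is not relatively compact in $L^q((0,\tau),E)$. Simon's criterion (Theorem~1 in the cited reference) requires, in addition to (b), that the time-averages $\bigl\{\int_{t_1}^{t_2} u(s)\,ds : u\in A\bigr\}$ be relatively compact in $E$ for every $0<t_1<t_2<\tau$. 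This third ingredient \emph{is} available here --- H\"older on the $L^q((0,\tau),V)$-bound shows these averages are bounded in $V$, hence relatively compact in $E$ by the compact embedding --- but you never invoke it, and the compact embedding $V\overset{c}\hookrightarrow E$ is otherwise used only inside the proof of Ehrling's inequality, where it does no direct compactness work on the set $A$. Inserting this step repairs the argument for $q<\infty$. Your treatment of $q=\infty$ via Arzel\`a--Ascoli does not suffer from this defect, since there you explicitly appeal to pointwise-in-$t$ relative compactness of the sections through the $L^\infty((0,\tau),V)$-bound and $V\overset{c}\hookrightarrow E$.
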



{\it Proof of Theorem \ref{limit}.} Let us fix $1<q<\infty$, $1<p\leq \infty$ and $\tau>0$. 
We decomposed the proof into three steps.

{\bf Step 1~:} Uniform estimates for $\partial_t(\nabla S^{\varepsilon})$ and $\partial_t(S^{\varepsilon})$.
We differentiate the conservation law \eqref{cell_conservation_eps} of $\rho_i^{\varepsilon}$  with respect to $x$.
\begin{equation*}
  \partial_t \nabla_x \rho_i^{\varepsilon}+\nabla_x \nabla \cdot J_i^{\varepsilon}=0 .
\end{equation*}
Summing these equalities for $i=1,2$ gives 
\begin{equation*}
\partial_t \nabla_x (\rho_1^{\varepsilon}+\rho_2^{\varepsilon})+\nabla_x \nabla \cdot(J_1^{\varepsilon}+J_2^{\varepsilon})=0.
\end{equation*}
By multiplying by the kernel $K(t-s,x-y)$ and integrating by parts, one gets 
 \begin{equation}\label{eq_gradS}
  \partial_t (\nabla_x S^{\varepsilon})+\nabla_x(\nabla \cdot S^{J,\varepsilon})=\nabla_x \left(\int_{\mathbb{R}^d}\!\!K(t,y)(\rho_1+\rho_2)(0,x-y)\,dy\right),
 \end{equation}
where $S^{J,\varepsilon}$ denotes
\begin{equation*}
 S^{J,\varepsilon}:=\int_0^t\!\!\int_{\mathbb{R}^d}\!\!K(t-s,x-y)(J_1^{\varepsilon}+J_2^{\varepsilon})(s,y)\,dy\,ds.
\end{equation*}
From Proposition~\ref{uniform_estimate}, $r_i^{\varepsilon}$ is uniformly bounded in $L^2(\Omega \times [0,\tau])$.
Since $J_i^{\varepsilon}=\int_V vr_i^\eps\,dv$, we deduce by the Cauchy-Schwarz
inequality that $J_i^\eps$ is also bounded in $L^2(\Omega \times [0,\tau])$.

The mathematical form of $S^{J,\varepsilon}$ implies that $S^{J,\varepsilon}$ 
satisfies the same parabolic equation as $S^{\varepsilon}$ with the 
right-hand side $J_1^{\varepsilon}+J_2^{\varepsilon}$.
Using the parabolic regularity, we conclude that 
 \begin{equation*}
  S^{J,\varepsilon} \in L^2((0,\tau],H^2_{\text{loc}}(\mathbb{R}^d)).
 \end{equation*}
 Then, from \eqref{eq_gradS}, we get that
\begin{equation*}
 \partial_t (\nabla S^{\varepsilon}) \in L^2((0,\tau],L^2_{\text{loc}}(\mathbb{R}^d)).
\end{equation*}
Moreover, we know the expression of $\partial_t S^{\varepsilon}$ from Lemma~\ref{diff_phi}.
\begin{equation*}
\partial_t S^{\varepsilon} =\int_{\mathbb{R}^d}\!\!\!K(y,\tau)(\rho_1^{ini}+\rho_2^{ini})(x-y)\,dy+\int_0^{\tau}\!\!\!\int_{\mathbb{R}^d}\!\!\!\nabla_x K(y,s)(J_1^{\varepsilon}+J_2^{\varepsilon})(x-y,\tau-s)\,dy\,ds.
\end{equation*}
The same conclusion holds for $\partial_t S^{\varepsilon}$~:
\begin{equation*}
 \partial_t S^{\varepsilon} \in L^2((0,\tau],L^2_{\text{loc}}(\mathbb{R}^d)).
\end{equation*}

{\bf Step 2~:} Extraction of subsequences.
By the Ascoli Theorem, we have the following embeddings~:
\begin{equation*}
 C^{0,\alpha}_{\text{loc}}(\mathbb{R}^d) \overset{c} \hookrightarrow L^p_{\text{loc}}(\mathbb{R}^d) \hookrightarrow  L^2_{\text{loc}}(\mathbb{R}^d),\quad 2\leq p \leq \infty.
\end{equation*}
From the Aubin-Lions-Simon Lemma~\ref{Aubin_Lions}, 
there exists subsequences $(S^{\varepsilon})_\eps$ and $(\nabla S^{\varepsilon})_\eps$ that strongly converge in $L^p_{\text{loc}}(\mathbb{R}^d \times (0,\tau])$. 
This result is extended to $p$ between $1$ and $2$ by the continuous embedding of $L^{\infty}_{\text{loc}}$ into $L^p_{\text{loc}}$. 
The extraction of weak-* convergent subsequences of $f_1^{\varepsilon},f_2^{\varepsilon}$ and weak convergent subsequences $r_1^{\varepsilon},r_2^{\varepsilon}$ follows from the separability or reflexivity property of Banach spaces $L^{\infty}((0,\tau],L^q(\Omega))$, $L^2(\Omega \times (0,\tau])$.\medskip\\
We find a subsequence of $(f_1^{\varepsilon},f_2^{\varepsilon},S^{\varepsilon})$ which satisfies
\begin{equation*}
\begin{aligned}
 f_1^{\varepsilon}&\overset{\ast}{\rightharpoonup} f_1^0,\quad f_2^{\varepsilon}\overset{\ast}{\rightharpoonup} f_2^0\quad\text{in } L^{\infty}((0,\tau],L^q(\Omega)),\\   
 r_1^{\varepsilon}&\rightharpoonup r_1^0,\quad r_2^{\varepsilon}\rightharpoonup r_2^0,\quad \text{in }L^2(\Omega \times (0,\tau]),\\
 S^{\varepsilon}&\rightarrow S^0,\quad \nabla_x S^{\varepsilon} \rightarrow \nabla_x S^0 \quad \text{in } L^p_{\text{loc}}(\mathbb{R}^d \times (0,\tau] ), 
\end{aligned}
\end{equation*}
for any $p,q$ such that $1<q<\infty$ and $1\leq p \leq \infty$.\\
As a consequence, $\rho_i^{\varepsilon}$ converges weakly-* in $L^{\infty}((0,\tau],L^q(\mathbb{R}^d))$.
\begin{equation*}
 (\rho_1^{\varepsilon},\rho_2^{\varepsilon})\overset{\ast}{\rightharpoonup} \left(\int_V f_1^0 dv,\int_V f_2^0 dv \right) \quad\text{in } L^{\infty}((0,\tau_0),L^q(\mathbb{R}^d)).\\   
\end{equation*}

{\bf Step 3~:} Passing to the limit. 
This step consists in identifying limits $f_1^0,f_2^0,r_1^0,r_2^0$. We now operate in either $L^2_{\text{loc}}((0,\tau],L^2(\mathbb{R}^d))$ or $L^2_{\text{loc}}((0,\tau],L^2(\Omega))$. We pass to the limit in the relation $f_i^{\varepsilon}=\rho_i^{\varepsilon} F+\varepsilon r_i^{\varepsilon}$ to get
\begin{equation*}
 f_1^0=\rho_1^0 F\quad \text{and } \quad f_2^0=\rho_2^0 F \quad \text{in } L^2_{\text{loc}}((0,\tau],L^2(\Omega)).
\end{equation*}
We replace $f_i^{\varepsilon}$ by $\rho_i^{\varepsilon} F+\varepsilon r_i^{\varepsilon}$ into $\mathcal{T}_i^{0}(f_i^{\varepsilon})$, the equation for $f_i^{\varepsilon}$ \eqref{kinetics_eps} now reads
\begin{equation}\label{eq_f_eps}
 \varepsilon \partial_t f_i^{\varepsilon}+ v\cdot \nabla_x f_i^{\varepsilon} =-\abs{V}\psi_i r_i^{\varepsilon}+\psi_i \left(\int_V \theta_i(\varepsilon \partial_t S^{\varepsilon}+v'\cdot \nabla_x S^{\varepsilon}){f_i^{\varepsilon}}^{'}dv'-\abs{V} \theta_i(\varepsilon S^{\varepsilon}+v\cdot \nabla_x S^{\varepsilon})f_i^{\varepsilon}\right).
\end{equation}
From the previous study, we have 
\begin{center}
 $\partial_t S^{\varepsilon}$ is uniformly bounded in $L^2((0,\tau],L^2_{\text{loc}}(\mathbb{R}^d)),$  
\end{center}
and 
\begin{center}
$\nabla_x S^{\varepsilon} \rightarrow \nabla_x S^0$ in $L^2((0,\tau],L^2_{\text{loc}}(\mathbb{R}^d))$.
\end{center}
Since $\theta_i$ is Lipschitz continuous, we deduce that 
\begin{equation*}
 \theta_i(\varepsilon \partial_t S^{\varepsilon}+v\cdot\nabla_x S^{\varepsilon})\rightarrow \theta_i(v\cdot\nabla_x S^0) \;\text{in}\; L^2((0,\tau],L^2_{\text{loc}}(\Omega)).
\end{equation*}
We also know that 
\begin{equation*}
 f_i^{\varepsilon} \rightharpoonup \rho_i^0 F \text{ in } L^2((0,\tau],L^2_{\text{loc}}(\Omega)).
\end{equation*}
The combination of these arguments gives 
\begin{equation*}
 \begin{aligned}
  \int_V \theta_i(\varepsilon \partial_t S^{\varepsilon}+v'\cdot \nabla_x S^{\varepsilon}) {f_i^{\varepsilon} }^{'} dv' &\rightharpoonup  \rho_i^0 \int_V \theta_i(v'\cdot\nabla_x S^0)\,\frac{dv'}{\abs{V}}\quad \text{ in } L^2((0,\tau],L^2_{\text{loc}}(\Omega)),\\
   \theta_i(\varepsilon \partial_t S^{\varepsilon}+v\cdot \nabla_x S^{\varepsilon}) f_i^{\varepsilon} &\rightharpoonup \rho_i^0 \theta_i(v\cdot\nabla_x S^0)\,\frac{\mathds{1}_{v\in V}}{\abs{V}}\quad \text{ in } L^2((0,\tau],L^2_{\text{loc}}(\Omega)).
 \end{aligned}
\end{equation*}
By passing to the limit in \eqref{eq_f_eps}, we obtain $r_i^0$.
\begin{equation*}
r_i^0=-\frac{v\cdot \nabla_x \rho_i^0}{\psi_i \abs{V}^2} +\frac{\rho_i^0}{\abs{V}}\left(\int_V \theta_i(v'\cdot\nabla_x S^0)\frac{dv'}{\abs{V}}-\theta_i(v\cdot\nabla_x S^0)\right).
\end{equation*}
We deduce that $J_i^{\varepsilon}$ converges weakly in $L^2((0,\tau],L^2_{\text{loc}}(\mathbb{R}^d))$.
\begin{equation*}
 J_i^{\varepsilon}=\frac{1}{\varepsilon}\int_V v\,f_i^{\varepsilon}=\int_V v \,r_i^{\varepsilon} \rightharpoonup \int_V v r_i^0.
\end{equation*}
By passing to the limit in the conservation law \eqref{cell_conservation_eps}, we find the equation for $\rho_i^0$.
\begin{equation*}
 \partial_t \rho_i^{0}+\nabla \cdot \int_V v \,r_i^0 =0 .
\end{equation*}
Next, the equation for $S^0$ is obtained by passing to the limit in \eqref{eq_S_eps}.

\section*{Appendix~: Well-posedness of the macroscopic system }
In this appendix, we explain briefly the global well-posedness of solutions
to the parabolic/parabolic or parabolic/elliptic system \eqref{limit_equation}
provided the chemosensitivities $\chi_i[S]$ are bounded and $D_i$ are symmetric 
positive definite. More precisely, we consider the system
\begin{equation*}
 \left\{
 \begin{aligned}
  &\partial_t \rho_1=\nabla \cdot \left(D_1\nabla_x \rho_1-\chi_1[S]\rho_1\right),\\
  &\partial_t \rho_2=\nabla \cdot \left(D_2\nabla_x \rho_2-\chi_2[S]\rho_2\right),\\
  &\delta \partial_t S=\Delta S-S+\rho_1+\rho_2,\quad \delta=0,1,
 \end{aligned}
 \right.
\end{equation*}
complemented with the initial condition
\begin{equation*}
 \rho_{1}^{ini}=\int_V f_1^{ini} dv,\quad \rho_{2}^{ini}=\int_V f_2^{ini} dv,\quad   S^{ini}=0 \quad \mbox{ if } \delta=1.
\end{equation*}
Then we have the following a-priori estimate~:

\begin{proposition}
Let $t>0$ and $q\geq 2$. Let $(\rho_1,\rho_2,S)$ be a positive weak
solution of \eqref{limit_equation} such that $\rho_i^{ini} \in L^q(\mathbb{R}^d)$. 
Let us assume that there exists $\chi_i^\infty$ such that $\|\chi_i[S]\|_{L^\infty} \leq \chi_i^\infty$
and that there exists $D_i^{min}$ such that $D_i X\cdot X\geq D_i^{min}\|X\|^2$.
Then, there exists a constant $C>0$ such that for all $i=1,2$, and $\tau\in [0,t]$,
$$
\int_{\R^d} |\rho_i|^q\,dx \leq C \qquad 
\int_0^\tau \int_{\R^d} \rho_i^{q-2} |\nabla_x \rho_i|^2\,dx\,ds \leq C
$$
\end{proposition}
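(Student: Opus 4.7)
The strategy is a standard $L^q$-energy estimate: multiply the equation for $\rho_i$ by $\rho_i^{q-1}$, integrate by parts over $\R^d$, and use the positive definiteness of $D_i$ to absorb the drift term via Young's inequality. Concretely, testing the $\rho_i$-equation in \eqref{limit_equation} against $\rho_i^{q-1}$ yields
\begin{equation*}
\frac{1}{q}\frac{d}{dt}\int_{\R^d}\rho_i^q\,dx
= -(q-1)\int_{\R^d}\rho_i^{q-2}\,D_i\nabla_x\rho_i\cdot\nabla_x\rho_i\,dx
+ (q-1)\int_{\R^d}\rho_i^{q-1}\chi_i[S]\cdot\nabla_x\rho_i\,dx.
\end{equation*}
The first term is bounded above by $-(q-1)D_i^{min}\int \rho_i^{q-2}|\nabla_x\rho_i|^2\,dx$ by the coercivity assumption on $D_i$.

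For the drift term, I would rewrite $\rho_i^{q-1}|\nabla_x\rho_i| = \rho_i^{q/2}\cdot\rho_i^{(q-2)/2}|\nabla_x\rho_i|$ and apply Young's inequality with weight $\eta = D_i^{min}$:
\begin{equation*}
(q-1)\chi_i^\infty \int_{\R^d}\rho_i^{q-1}|\nabla_x\rho_i|\,dx
\leq \frac{(q-1)D_i^{min}}{2}\int_{\R^d}\rho_i^{q-2}|\nabla_x\rho_i|^2\,dx
+ \frac{(q-1)(\chi_i^\infty)^2}{2D_i^{min}}\int_{\R^d}\rho_i^q\,dx.
\end{equation*}
Inserting into the energy identity yields the key differential inequality
\begin{equation*}
\frac{d}{dt}\int_{\R^d}\rho_i^q\,dx
+ \frac{q(q-1)D_i^{min}}{2}\int_{\R^d}\rho_i^{q-2}|\nabla_x\rho_i|^2\,dx
\leq \frac{q(q-1)(\chi_i^\infty)^2}{2D_i^{min}}\int_{\R^d}\rho_i^q\,dx.
\end{equation*}

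Then Gronwall's lemma on $[0,t]$ gives $\int_{\R^d}\rho_i^q(\cdot,\tau)\,dx \leq e^{C\tau}\int_{\R^d}(\rho_i^{ini})^q\,dx$ with $C = q(q-1)(\chi_i^\infty)^2/(2D_i^{min})$, which is bounded by a constant depending on $t$, $q$, $D_i^{min}$, $\chi_i^\infty$ and the initial data; this produces the first estimate. Integrating the differential inequality once more in time from $0$ to $\tau\leq t$ and using the uniform $L^q$-bound just obtained on the right-hand side yields the second estimate on $\int_0^\tau\!\!\int_{\R^d}\rho_i^{q-2}|\nabla_x\rho_i|^2\,dx\,ds$.

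The only subtlety in turning this into a rigorous proof lies in justifying the integration by parts and the test against $\rho_i^{q-1}$ when $\rho_i$ is only known to be a weak solution; I would handle this by an approximation argument (truncating $\rho_i$ or working with a regularized equation, passing to the limit using the lower semicontinuity of the dissipation). Everything else is completely routine: the estimate decouples between the two species (since the only coupling is through $S$, and $\chi_i[S]$ enters only via its $L^\infty$ bound), so no blow-up can occur in finite time and the local-in-time solution extends globally.
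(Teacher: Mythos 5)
Your proof is correct and follows essentially the same route as the paper's: an $L^q$-energy estimate obtained by testing against $\rho_i^{q-1}$, using the coercivity of $D_i$ for the dissipation, Young's inequality with weight $D_i^{min}$ to absorb the drift, and Gronwall to conclude (the paper merely writes the dissipation in the equivalent form $|\nabla_x(\rho_i^{q/2})|^2$). Your added remark on justifying the test function for weak solutions by approximation is a point the paper passes over silently.
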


\begin{proof}
 We multiply the equation of $\rho_i$ by $\rho_i^{q-1}$ ($q\geq 2$) and integrate over $\mathbb{R}^d$.
 \begin{equation*}
  \frac{1}{q}\frac{d}{dt}\int_{\mathbb{R}^d} \rho_i^q \,dx =-\int_{\mathbb{R}^d}D_i \nabla_x \rho_i \cdot \nabla(\rho_i^{q-1})\,dx + \int_{\mathbb{R}^d}\rho_i \chi_i[S]\cdot \nabla_x (\rho_i^{q-1})\,dx. 
 \end{equation*}
Using assumptions on $D_i$ and $\chi_i[S]$ yields
\begin{equation*}
 \frac{1}{q}\frac{d}{dt}\int_{\mathbb{R}^d} \rho_i^q dx \leq-4D_i^{min}\frac{(q-1)}{q^2}\int_{\mathbb{R}^d} \abs{\nabla_x(\rho_i^{q/2})}^2 dx +2\chi_i^{\infty}\frac{q-1}{q}\int_{\mathbb{R}^d} \rho_i^{q/2}\abs{\nabla_x(\rho_i^{q/2})}dx,
\end{equation*}
Using the Cauchy-Schwarz inequality and Young's inequality $2ab\leq \eps a^2 + b^2/\eps$ with $\displaystyle \varepsilon=\frac{2D_i^{min}}{q \chi_i}$, we get
\begin{equation}\label{ineq_energy}
 \frac{1}{q}\frac{d}{dt}\int_{\mathbb{R}^d} \rho_i^q 
 + 2D_i^{min}\frac{(q-1)}{q^2}\int_{\mathbb{R}^d} \abs{\nabla_x(\rho_i^{q/2})}^2 dx
\leq (\chi_i^{\infty})^2\frac{q-1}{ 2D_i^{min}}\int_{\mathbb{R}^d} \rho_i^q dx.
\end{equation}
We conclude by the Gronwall inequality.
\end{proof}

In the case at hand, $D_i,\chi_i[S]$ are defined by
\begin{equation*}
 D_i=\frac{1}{\abs{V}^2 \psi_i}\int_V v \otimes v \,dv \quad \mbox{and } \chi_i[S]=-\int_V v \theta_i(v\cdot \nabla S)\,\frac{dv}{\abs{V}}.
\end{equation*}
Then each $D_i$ is a positive, defnite and diagonal matrix, since the domain $V$ is symmetric; $\chi_i[S]$ is also bounded. 
We can apply the previous result which implies the non blow-up in finite time
of weak solution. Then global existence is obtained by extending local existence
thanks to the above a priori estimate. 
We refer the interested reader to \cite{Chertock}, where the single-species 
case has been considered.

\bigskip

{\bf Acknowledgement. } The authors ackowledge partial support from the ANR project Kibord, 
ANR-13-BS01-0004 funded by the French Ministry of Research.


\bibliographystyle{plain}
\bibliography{paper_references}

\end{document}